\newcommand{\rrVert}{\Vert}
\newcommand{\rrvert}{\vert}
\newcommand{\llVert}{\Vert}
\newcommand{\llvert}{\vert}
\def\cal{\mathcal}
\newtheorem{thmm}{Theorem}
\newtheorem{lem}[thmm]{Lemma}
\newtheorem{prop}[thmm]{Proposition}
\newtheorem{cor}[thmm]{Corollary}
\newcommand{\N}{\mathbb{N}}
\newcommand{\R}{\mathbb{R}}
\newcommand{\Co} {\mathcal{C}[0,1]}
\newcommand{\Do} {\mathcal{D}[0,1]}
\newcommand{\Drn}{\mathcal{D}_{r_n}[0,1]}
\newcommand{\p}[1]{\mathbf{P}(#1)}
\newcommand{\Ec}[1]{\mathbf{E} [{#1}]}
\newcommand{\eqdist}{\stackrel{d}{=}}
\newcommand{\E}[1]{\mathbf{E} [{#1} ]}
\newcommand{\V}[1]{\operatorname{Var} ({#1} )}
\newcommand{\Vc}[1]{\operatorname{Var}({#1})}
\newcommand{\Prob}[1]{\mathbf{P} ({#1} )}
\newcommand{\I}[1]{\mathbf{1}_{ \{ {#1} \} }}
\newcommand{\vol}{\operatorname{Leb}}
\newcommand{\gest}{\ge_{\mathrm{st}}}
\newcommand{\eqref}[1]{(\ref{#1})}
\begin{document}
\begin{frontmatter}

\title{A limit process for partial match queries in random quadtrees and $2$-d trees}
\runtitle{Partial match queries in random quadtrees}

\begin{aug}
\author[A]{\fnms{Nicolas} \snm{Broutin}\corref{}\ead[label=e1]{nicolas.broutin@inria.fr}},
\author[B]{\fnms{Ralph} \snm{Neininger}\ead[label=e2]{neiningr@math.uni-frankfurt.de}}
\and
\author[B]{\fnms{Henning} \snm{Sulzbach}\ead[label=e3]{sulzbach@math.uni-frankfurt.de}}
\runauthor{N. Broutin, R. Neiningern and H. Sulzbach}
\affiliation{Inria Rocquencourt, J.~W. Goethe-Universit\"at Frankfurt
and J.~W.~Goethe-Universit\"at Frankfurt}
\address[A]{N. Broutin\\
Inria Paris--Rocquencourt\\
Domaine de Voluceau\\
78153 Le Chesnay\\
France\\
\printead{e1}}
\address[B]{R. Neininger\\
H. Sulzbach\\
Institut f\"ur Mathematik\\
J.~W. Goethe-Universit\"at\\
60054 Frankfurt a.M.\\
Germany\\
\printead{e2}\\
\phantom{E-mail:\ }\printead*{e3}}
\end{aug}

\received{\smonth{2} \syear{2012}}
\revised{\smonth{11} \syear{2012}}

%
\begin{abstract}We consider the problem of recovering items matching a
partially specified pattern in multidimensional trees (quadtrees and
$k$-d trees). We assume the traditional model where the data consist of
independent and uniform points in the unit square. For this model, in a
structure on $n$ points, it is known that the number of nodes $C_n(\xi
)$ to visit in order to report the items matching a random query $\xi$,
independent and uniformly distributed on $[0,1]$, satisfies $\Ec{C_n(\xi
)}\sim\kappa n^{\beta}$, where $\kappa$ and $\beta$ are explicit
constants. We develop an approach based on the analysis of the cost
$C_n(s)$ of any fixed query $s\in[0,1]$, and give precise estimates
for the variance and limit distribution of the cost $C_n(x)$. Our
results permit us to describe a limit process for the costs $C_n(x)$ as
$x$ varies in $[0,1]$;
one of the consequences is that $\Ec{\max_{x\in[0,1]} C_n(x)} \sim
\gamma n^\beta$; this settles a question of Devroye [Pers. Comm., 2000].
\end{abstract}

%
\begin{keyword}[class=AMS]
\kwd[Primary ]{60C05}
\kwd{60F17}
\kwd{05C05}
\kwd[; secondary ]{11Y16}
\kwd{05A15}
\kwd{05A16}
\end{keyword}

\begin{keyword}
\kwd{Analysis of algorithms}
\kwd{quadtree}
\kwd{limit distribution}
\kwd{contraction method}
\end{keyword}

\end{frontmatter}

\section{Introduction}\label{secintro}

Geometric databases arise in a number of contexts such as computer
graphics, management of geographical data or statistical analysis. The
aim consists in retrieving the data matching specified patterns
efficiently. We are interested in tree-like data structures which
permit such efficient searches. When the pattern specifies precisely
all the data fields (we are looking for an \emph{exact match}), the
query can generally be answered in time logarithmic in the size of the
database, and many precise analyses are available in this case, see,
for example, \cite
{FlLa1994,FlLaLaSa1995,Knuth1998,Mahmoud1992a,FlSe2009}. When the
pattern only constrains some of the data fields (we are looking for a
\emph{partial match}), the searches must explore multiple branches of
the data structure to report the matching data, and the cost usually
becomes polynomial.

The first investigations about partial match queries by Rivest \cite
{Rivest1976} were based on digital data structures (based on
bit-comparisons). In a comparison-based setting, where the data may be
compared directly at unit cost, a few general purpose data structures
generalizing binary search trees permit to answer partial match
queries, namely the quadtree \cite{FiBe1974}, the $k$-d tree \cite
{Bentley1975} and the relaxed $k$-d tree \cite{DuEsMa1998}.
Besides the interest that one might have in partial match for its own
sake, there are various reasons that justify the precise quantification
of the cost of such general search queries in comparison-based data
structures. First, these multidimensional trees are data structures of
choice for applications that range from collision detection in motion
planning to mesh generation \cite{YeSh1983a,hole1988}. Furthermore, the
cost of partial match queries also appears in (hence influences) the
complexity of a number of other geometrical search questions such as
range search \cite{DuMa2002a} or rank selection~\cite{DuJiMa2010}.
For general references on multidimensional data structures and more
details about their various applications, see the series of monographs
by Samet \cite{Samet1990a,Samet1990,Samet2006}.

In this paper, we provide refined analyses of the costs of partial
match queries in some of the most important two dimensional data
structures. We mostly focus on quadtrees. We extend our results to the
case of $k$-d trees in Section~\ref{seckd}. Similar results also hold
for relaxed $k$-d trees of Duch, Estivill-Castro, and Mart\'inez \cite{DuEsMa1998}. However, even stating
them carefully would require much space without shedding anymore light
on the phenomena, and we leave the straightforward modifications to the
interested reader.

\textsc{Quadtrees and multidimensional search.}
The quadtree \cite{FiBe1974} allows to manage multidimensional data by
extending the divide-and-conquer approach of the binary search tree.
Consider the point sequence $p_1,p_2,\ldots, p_n\in[0,1]^2$. As we
build the tree, regions of the unit square are associated to the nodes
where the points are stored. Initially, the root is associated with the
region $[0,1]^2$, and the data structure is empty. The first point
$p_1$ is stored at the root, and divides the unit square into four
regions, $Q_1, \ldots, Q_4$. Each region is assigned to a child of the
root. More generally, when $i$ points have already been inserted, we
have a set of $1+3i$ (lower-level) regions that cover the unit square.
The point $p_{i+1}$ is stored in the node (say $u$) that corresponds to
the region it falls in, and divides it into four new regions that are
assigned to the children of $u$. See Figure~\ref{figquadtree}.

%
%

\begin{figure}

\includegraphics{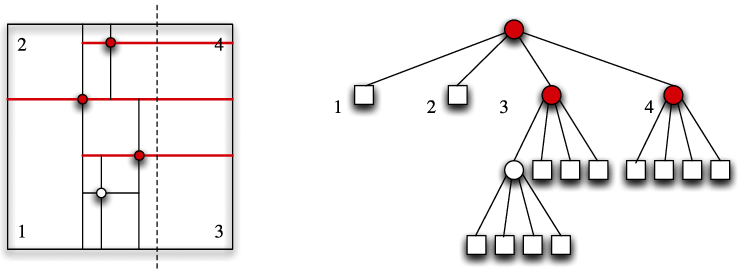}

\caption{An example of a (point) quadtree:
on the left the partition of the unit square induced by the tree data
structure on the right (the children are ordered according to the
numbering of the regions on the left). Answering the partial match
query materialized by the dashed line on the left requires one to visit
the colored nodes. Note that each one of the visited
nodes correspond to a horizontal line that is crossed by the query.}\label{figquadtree}
\end{figure}

\textsc{Analysis of partial match retrieval.}
For the analysis, we will focus on the model of \emph{random
quadtrees}, where the data points are independent and uniformly
distributed in the unit square. In the present case, the data are just
points and the problem of partial match retrieval consists in reporting
all the data with one of the coordinates (say the first) being $s\in[0,1]$.
It is a simple observation that the number of nodes of the tree visited
when performing the search is precisely $C_n(s)$, the number of
regions in the quadtree that intersect a vertical line at $s$. The
first analysis of partial match in quadtrees is due to Flajolet et~al. \cite
{FlGoPuRo1993} (after the pioneering work of Flajolet and Puech \cite{FlPu1986} in the
case of $k$-d trees). They studied the singularities of a differential
system for the generating
functions of partial match cost to prove that, for a random query $\xi
$, being independent of the tree and uniformly distributed on $[0,1]$,
one has
$ \Ec{C_n(\xi)}\sim\kappa  n^{\beta}$
where
%
\begin{equation}
\label{defbeta} \kappa= \frac{\Gamma(2\beta+2)}{2\Gamma(\beta+1)^3} \quad\mbox {and}\quad \beta=
\frac{\sqrt{17}-3}2,
\end{equation}
and $\Gamma(x)$ denotes the Gamma function $\Gamma(x)=\int_0^\infty
t^{x-1} e^{-t} \,dt$. Flajolet et~al. \cite{FlGoPuRo1993} actually proved a more precise
version of this estimate which will be crucial for us,
%
\begin{equation}
\label{hwangquad}\mathbf{E} \bigl[C_n(\xi) \bigr] = \kappa n^{\beta} -
1 + O \bigl(n^{\beta-1} \bigr).
\end{equation}
(This may also be obtained from the explicit expression for $\Ec{C_n(\xi
)}$ devised by Chern and Hwang~\cite{ChHw2003}.)

Our aim in this paper is to gain a refined understanding of the cost
beyond the level of expectations. In order to quantify the order of
typical deviations from the mean, we study the order of the variance
together with limit distributions. However, deriving higher moments
turns out to be subtle. In particular, when the query line is random
(like above) although the four subtrees at the root are independent
given their sizes, the contributions of the two subtrees that \emph{do
hit} the query line are \emph{dependent}. Indeed, the relative location
of the query line inside these two subtrees is again uniform, but
unfortunately it is same in both regions. Hence, one cannot easily
setup recurrence relations and perform an asymptotic analysis
exploiting independence. This issue has not yet been addressed
appropriately, and there is currently no result on the variance or
higher moments for~$C_n(\xi)$.

Another issue lies in the definition of the cost measure itself: even
if the data follow some distribution, should one assume that the query
follows the same distribution? In other words, should we focus on
$C_n(\xi)$? Maybe not. But then, what distribution should one use for
the query line?

One possible approach to overcome both problems is to consider the
query line to be fixed and to study $C_n(s)$ for $s\in[0,1]$. This
raises another problem: even if~$s$ is fixed at the top level, as the
search is performed, the relative location of the queries in the
recursive calls varies from one node to another. Thus, in following
this approach, one is led to consider the entire stochastic process
$(C_n(s))_{s\in[0,1]}$; this is the method we use here.

Recently Curien and Joseph \cite{CuJo2010} obtained some results in this direction. They
proved that for every fixed $s\in(0,1)$,
%
\begin{equation}
\label{constCJ} \mathbf{E}\bigl[C_n(s) \bigr]\sim K_1 \cdot h(s)
n^{\beta} \qquad\mbox{with } K_1 = \frac{\Gamma(2 \beta+2) \Gamma(\beta+ 2)}{2 \Gamma(\beta+ 1)^3
\Gamma (\beta/2 + 1  )^2},
\end{equation}
where the function $h$ defined below will play a central role in the
entire study
%
\begin{equation}
\label{defh} h(s):= \bigl(s(1-s) \bigr)^{\beta/2}.
\end{equation}
On the other hand, Flajolet et~al. \cite{FlGoPuRo1993,FlLaLaSa1995} prove that, along
the edge one has $\Ec{C_n(0)}= \Theta(n^{\sqrt{2}-1})$, so that $\Ec
{C_n(0)}=o(n^{\beta})$ (see also \cite{CuJo2010}). The behavior about
the $x$-coordinate $U$ of the first data point certainly resembles that
along the edge, so that one has $\Ec{C_n(U)}=o(n^{\beta})$. This
suggests that $C_n(s)$ should not be concentrated around its mean, %
and that $n^{-\beta}C_n(s)$ should converge to a nondegenerate random
variable as $n\to\infty$. Below, we confirm this and prove a functional
limit law for $(n^{-\beta}C_n(s))_{s\in[0,1]}$ and characterize the
limit process. From this we obtain refined asymptotic information on
the complexity of partial match queries in quadtrees.

\section{Main results and implications}\label{secmainresults}
We denote by $\Do$ the space of c\`adl\`ag functions on $[0,1]$ and by
$\|f\|:=\sup_{t\in[0,1]}|f(t)|$ the uniform norm of $f \in\Do$.
Our main contribution is to prove the following convergence result:
%
\begin{thmm}\label{thmprocess}Let $C_n(s)$ be the cost of a partial
match query at a fixed line~$s$ in a random quadtree. Then there exists
a random continuous function $Z$ such that, as $n\to\infty$,
%
\begin{equation}
\label{eqprocess} \biggl(\frac{C_n(s)}{K_1 n^{\beta}},s\in[0,1] \biggr) \stackrel{d}{\rightarrow}
\bigl(Z(s), s \in[0,1] \bigr).
\end{equation}
This convergence in distribution holds in $\Do$ equipped with the
Skorokhod topology.
\end{thmm}

\begin{figure}

\includegraphics{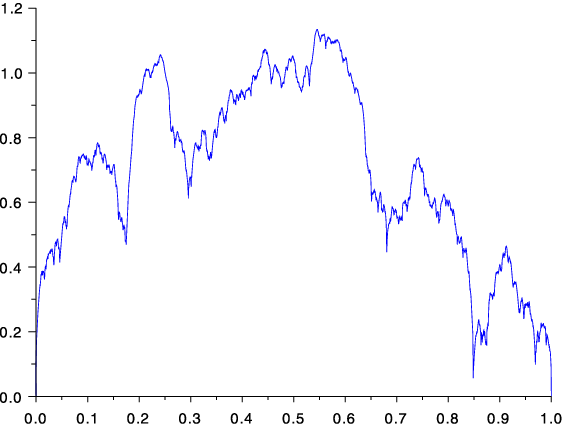}

\caption{A simulation of the limit process $Z$.}\label{figlimitprocess}
\end{figure}

The limit process $Z$ may be characterized as follows (see Figure~\ref
{figlimitprocess} for a simulation):
%
\begin{prop} \label{propchar}
The distribution of the random function $Z$ in \eqref{eqprocess} is a
fixed point of the following functional recursive distributional
equation, as process in $s\in[0,1]$:
%
\begin{eqnarray}
\label{eqlimitprocessa} Z(s)&\eqdist &\I{s<U} \biggl[(UV)^\beta
Z^{(1)} \biggl(\frac{ s}{ U} \biggr) + \bigl(U(1-V)
\bigr)^\beta Z^{(2)} \biggl(\frac{ s}{ U} \biggr) \biggr]
\nonumber
\\
&&{}+\I{s\ge U} \biggl[ \bigl((1-U)V \bigr)^\beta
Z^{(3)} \biggl( \frac{s-U}{1-U}\biggr)\\
&&\hspace*{14pt}\qquad\quad{}+ \bigl((1-U) (1-V)
\bigr)^\beta Z^{(4)}\biggl( \frac
{s-U}{1-U}\biggr) \biggr],\nonumber
\end{eqnarray}
where $U$ and $V$ are independent $[0,1]$-uniform random variables and
$Z^{(i)}$, $i=1,\ldots, 4$ are independent copies of the process $Z$,
which are also independent of $U$ and $V$. Furthermore, $Z$ in \eqref
{eqprocess} is the only continuous solution of \eqref
{eqlimitprocessa} with $\Ec{\|Z\|^2}<\infty$ and $\Ec{Z(\xi)} =
\Gamma(\beta/2+1)^2/\Gamma(\beta+ 2)$ where $\xi$ is independent of
$Z$ and uniformly distributed on $[0,1]$.
\end{prop}

The methods applied to prove Theorem~\ref{thmprocess} also
guarantee convergence of the variance of the costs of partial match
queries. The following theorem for uniform queries $\xi$ is the direct
extension of the pioneering work in \cite{FlPu1986,FlGoPuRo1993} for
the cost of partial match queries at a uniform line $\xi$ in random
two-dimensional trees.

\begin{thmm} \label{thmmoments}
If $\xi$ is uniformly distributed on $[0,1]$, independent of $(C_n)$
and~$Z$, then
\[
\frac{C_n(\xi)}{K_1 n^{\beta}} \to Z(\xi)
\]
in distribution, as $n\to\infty$. Moreover, $\V{C_n(\xi)} \sim K_4 n^{2
\beta}$
where $K_4\approx\break 0.447363034$ is given by, with $K_1$ in (\ref{constCJ}),
%
\begin{eqnarray}
\label{eqvarzxi} K_4 &:= &K_1^2 \cdot
\operatorname{Var}\bigl(Z( \xi)\bigr) 
\nonumber
\\[-8pt]
\\[-8pt]
\nonumber
&=&K_1^2
\biggl(\frac{2(2\beta+1)}{3(1-\beta)} \mathrm{B}(\beta+1,\beta+1)^2 - \mathrm{B}(
\beta/2+1, \beta/2+1)^2 \biggr).
\end{eqnarray}
\end{thmm}
Here $\mathrm{B}(a,b):=\int_0^1 t^{a-1}(1-t)^{b-1}\,dt$ denotes the
Eulerian integral for $a,b>-1$.
In particular, Theorem~\ref{thmmoments} identifies the first-order
asymptotics of\break $\Vc{C_n(\xi)}$ which is to be compared with studies
that neglected the dependence between the contributions of the subtrees
mentioned above \cite{MaPaPr2001,NeRu2001,Neininger2000}. A refined
result about the variance $\Vc{C_n(s)}$ at a fixed location reads
\[
\operatorname{Var} \bigl(C_n(s) \bigr) \sim K_1^2
\operatorname{Var} \bigl(Z(s) \bigr) n^{2 \beta},
\]
where $s\in(0,1)$ and an explicit expression for $\V{Z(s)}$ is given by
%
\begin{equation}
\label{constvarfix} \operatorname{Var} \bigl(Z(s) \bigr) =K_2
h^2(s) = \biggl[ 2 \mathrm{B} (\beta+1, \beta+1 ) \frac{2\beta+1}{3(1-\beta)}
-1 \biggr] h^2(s).
\end{equation}

Another consequence of Theorem \ref{thmprocess} concerns the order of
the cost of the worst query given by $\sup_{s\in[0,1]} C_n(s)$.
%
\begin{thmm}\label{thmsupremum}Let $S_n=\sup_{s\in[0,1]} C_n(s)$. Then
as $n\to\infty$,
\[
\frac{S_n}{K_1 n^{\beta}} \to S:=\sup_{s\in[0,1]} Z(s),
\]
in distribution and with convergence of all moments. In particular, $\Ec
{S}<\infty$, $\Vc{S}<\infty$ and we have
\[
\Ec{S_n} \sim K_1 n^{\beta} \Ec{S}\quad \mbox{and}\quad
\Vc{S_n} \sim K_1^2 n^{2\beta} \Vc{S}.
\]
\end{thmm}
Note that the sequence $n^{-\beta}\Ec{S_n}$ is bounded. In particular,
$\Ec{S_n}$ has the same order of magnitude as the cost of a search
query at any single location, and does not include any extra factor
growing with $n$. Interestingly, the one-dimensional marginals of the
limit process $(Z(s), s\in[0,1])$ are all the same up to a
deterministic multiplicative constant given by the function $h$:
%
\begin{thmm}\label{thmsame-Xi}
There exists a random variable $\Psi\ge0$ such that for all $s\in[0,1]$,
%
\begin{equation}
\label{1dimrv} Z(s)\stackrel{d} {=} h(s) \cdot\Psi.
\end{equation}
The distribution of $\Psi$ is the unique solution of the fixed-point equation
%
\begin{equation}
\label{fixedone} \Psi\stackrel{d} {=} U^{\beta/2} V^\beta\Psi+
U^{\beta/2} (1-V)^\beta \Psi
\end{equation}
with $\Ec{\Psi} = 1$ and $\Ec{\Psi^2} < \infty$ where $\Psi'$ is an
independent copy of $\Psi$ and $(\Psi,\Psi')$ is independent of $(U,V)$.
\end{thmm}
Convergence of all moments of the supremum $n^{-\beta}S_n$ in Theorem
\ref{thmsupremum} implies uniform integrability of any moment of the
process $n^{-\beta}C_n$, hence the following result about convergence
of all moments.
%
\begin{cor}\label{thmmixedn}
For all $s \in[0,1]$, we have
\[
\mathbf{E} \biggl[ \biggl(\frac{C_n(s)}{K_1 n^{\beta}} \biggr)^m \biggr]
\rightarrow \mathbf{E} \bigl[Z(s)^m \bigr] = c_m
h(s)^m
\]
for all $m \in\N$ as $n \to\infty$ where $c_m$ is given by
%
\begin{eqnarray}
\label{recmomentsZ} c_m &=& \frac{\beta m +1}{(m-1)  (m+1 - (3 /2) \beta m  )}
\nonumber
\\[-8pt]
\\[-8pt]
\nonumber
&&{}\times\sum
_{\ell= 1}^{m-1} \pmatrix{m \cr\ell} \mathrm{B} \bigl(\beta
\ell+1, \beta(m-\ell)+1 \bigr) c_\ell c_{m-\ell}
\end{eqnarray}
for $m \geq2$ where $c_1 = 1$.
An analogous result holds true for $\Ec{C_n(\xi)}$ where $\xi$ is
uniform on $[0,1]$ and independent of $(C_n)_{n \geq0}$ and $Z$, and
for moments involving queries at multiple locations.
\end{cor}

\textsc{Plan of the paper.} Our approach requires to work with
the process $(C_n(s)\dvtx s\in[0,1])$ and is based on the recursive
decomposition of the tree at the root. This yields a recursive
distributional recurrence for $(C_n(s)\dvtx s\in[0,1])$ to which we apply a
functional version of the contraction method. In Section~\ref
{seccontraction}, we give an overview of this underlying methodology.
In particular, we discuss the novel results of Neininger and Sulzbach
\cite{NeSu2011a} about the contraction method in function spaces which
we will apply.
Sections~\ref{seclimit} and~\ref{secunifconvergence} are dedicated
to the proofs of two of the main ingredients required to apply the
results from \cite{NeSu2011a}, the existence of a continuous solution
of the limit recursive equation and the uniform convergence of the
rescaled first moment $n^{-\beta}\Ec{C_n(s)}$ at an appropriate rate.
In Section~\ref{secmomentssup}, we identify the variance and the
supremum of the limit process $Z$ and deduce the large $n$ asymptotics
for $C_n(s)$ in Theorems~\ref{thmmoments} and~\ref{thmsupremum}.
Finally, we prove analogous results for the cases of $2$-d trees in
Section~\ref{seckd}. Our results on quadtrees have been announced in
the extended abstract \cite{BrNeSu2011a}.

\section{Contraction method in function spaces} \label{seccontraction}

\subsection{Overview of the method}\label{secoverviewcont}
The aim of this section is to give an overview of the method we employ
to prove Theorem~\ref{thmprocess}. It is based on a contraction
argument in a certain space of probability distributions. In the
context of the analysis of algorithms, the method was first employed by
R{\"o}sler \cite{Roesler1991a} who proved convergence in distribution for the
rescaled total cost of the randomized version of quicksort. The method
was then further developed by R{\"o}sler \cite{Roesler1992}, Rachev and R{\"u}schendorf~\cite{RaRu1995},
and later on in \cite{Ro01,Ne01,NeRu04,NeRu04b,DrJaNe,EiRu07} and
has permitted numerous analyses in distribution for random discrete structures.

So far, the method has mostly been used to analyze random variables
taking real values, though a few applications on function spaces have
been made; see \cite{DrJaNe,EiRu07,GR09}.
Here we are interested in the function space ${\cal D}[0,1]$ endowed
with the Skorokhod topology (see, e.g., \cite{Billingsley1999}), but
the main idea persists: (1) devise a recursive equation for the
quantity of interest [here the process $(C_n(s), s\in[0,1])$], and (2)~based on a properly rescaled version of the quantity deduce a limit
equation, that is, a recursive distributional equation that the limit
may satisfy; (3) if the map of distributions associated to the limit
equation is a contraction in a certain metric space, then a fixed point
is unique and may be obtained by iteration. The contraction may also be
exploited to obtain weak convergence to the fixed point. We now move on
to the first step of this program.

Write $I_1^{(n)},\ldots, I_4^{(n)}$ for the number of points falling in
the four regions created by the point stored at the root.
Then, given the coordinates of the first data point $(U,V)$, we have
(cf.~Figure~\ref{figquadtree})
%
\begin{eqnarray}
\label{eqsizes} &&\bigl(I_1^{(n)},\ldots,
I_4^{(n)} \bigr)
\nonumber
\\[-8pt]
\\[-8pt]
\nonumber
&&\qquad \eqdist\operatorname{Mult} \bigl(n-1;
UV,U(1-V), (1-U) (1-V), (1-U)V \bigr).
\end{eqnarray}
Observe that, for the cost inside a subregion, what matters is the
location of the query line \emph{relative} to the region. Thus a
decomposition at the root yields the following recursive relation for
any $n\ge1$:
%
\begin{eqnarray}
\label{eqCnrec} C_n(s)& \eqdist&1 + \I{s<U} \biggl[C^{(1)}_{I_1^{(n)}}
\biggl(\frac {s} {U}\biggr) + C^{(2)}_{I_2^{(n)}} \biggl({
\frac{ s}{ U}}\biggr)\biggr]
\nonumber
\\[-8pt]
\\[-8pt]
\nonumber
&&{}+\I{s\ge U} \biggl[C^{(3)}_{I_3^{(n)}}\biggl(\frac{1-s}{1-U}
\biggr) + C^{(4)}_{I_4^{(n)}} \biggl(\frac{1-s}{1-U}\biggr)
\biggr],
\end{eqnarray}
where $U,I_1^{(n)},\ldots, I_4^{(n)}$ are the quantities already
introduced and $(C^{(1)}_k), \ldots,\break (C^{(4)}_k)$ are independent copies
of the sequence $(C_k, k\ge0)$, independent of $(U,V,I_1^{(n)}, \ldots,
I_4^{(n)})$. We stress that this equation does not only hold true
pointwise for fixed $s$ but also as c\`adl\`ag functions on the unit
interval. The relation in \eqref{eqCnrec} is the fundamental equation
for us.

Letting $n\to\infty$ (formally) in \eqref{eqCnrec} suggests that if
$n^{-\beta} C_n(s)$ does converge to a random variable $Z(s)$ in a
sense to be made precise, then the distribution of the process $(Z(s),
0\le s\le1)$ should satisfy the following fixed point equation:
%
\begin{eqnarray}
\label{eqlimitprocess} Z(s)&\eqdist &\I{s<U}\biggl[(UV)^\beta
Z^{(1)}\biggl(\frac {s}{ U}\biggr)+ \bigl(U(1-V)
\bigr)^\beta Z^{(2)}\biggl(\frac {s}{ U}\biggr)\biggr]\nonumber
\\
&&{}+\I{s\ge U}\biggl[ \bigl((1-U)V \bigr)^\beta Z^{(3)} \biggl(
\frac{s-U}{1-U}\biggr)\\
&&\hspace*{25pt}\qquad{}+ \bigl((1-U) (1-V) \bigr)^\beta
Z^{(4)}\biggl( \frac{s-U}{1-U}\biggr)\biggr],\nonumber
\end{eqnarray}
where $U$ and $V$ are independent $[0,1]$-uniform random variables and
$Z^{(i)}$, $i=1,\ldots, 4$ are independent copies of the process $Z$,
which are also independent of $U$ and $V$.

The last step leading to the fixed point equation \eqref
{eqlimitprocess} needs now to be made rigorous. It is at this point
that the contraction method enters the game. The distribution of a
solution to our fixed-point equation (\ref{eqlimitprocess}) lies in
the set of probability measures on the Polish space $(\mathcal D[0,1],
d)$, which is the set we have to endow with a suitable metric. Here,
$d$ denotes the Skorokhod metric; see, for example, \cite{Billingsley1999}.

The recursive equation (\ref{eqCnrec}) is an example for the
following, more general setting of random additive recurrences: Let
$(X_n)$ be $\Do$-valued random variables with
%
\begin{equation}
\label{eqrecgen} X_n \stackrel{d} {=} \sum
_{r=1}^K A_r^{(n)} \bigl(
X_{I_r^{(n)}}^{(r)} \bigr) + b^{(n)},\qquad n \geq1,
\end{equation}
where $(A_1^{(n)}, \ldots, A_K^{(n)})$ are random continuous linear
operators on $\Do$, $b^{(n)}$ is a $\Do$-valued random variable,
$I_1^{(n)}, \ldots, I_K^{(n)}$ are random integers between $0$ and
$n-1$ and the sequences of process $(X_n^{(1)})$, \ldots, $(X_n^{(K)})$
are distributed like $(X_n)$. Moreover $(A_1^{(n)}, \ldots, A_K^{(n)},
b^{(n)}, I_1^{(n)}, \ldots, I_K^{(n)})$,\break $(X_n^{(1)}), \ldots,
(X_n^{(K)})$ are independent.

At this point, one should comment on the term random continuous linear
operator: As explained explicitly in \cite{NeSu2011a}, $A$ is a random
continuous linear operator
on $\Do$, if it takes values in the set of endomorphisms on $\Do$ that
are both continuous with respect to the supremum norm and to the
Skorokhod metric. Moreover,
for any $f \in\Do$ and $t \in[0,1]$, the quantity $Af(t)$ has to be a
real-valued random variable, and the same is assumed for $\|A\|_\mathrm
{op}$ (see below for the definition).
Finally, we remember
that convergence $d(f_n,f) \rightarrow0$ in the Skorokhod\vadjust{\goodbreak} metric means
that there exists a sequence of monotonically increasing bijections
$(\lambda_n)$ on the
unit interval such that $f_n(\lambda_n(t)) \rightarrow f(t)$ and
$\lambda_n(t) \rightarrow t$ both uniformly in $t$ as $n \rightarrow
\infty$.

To establish Theorem \ref{thmprocess} as a special case of this
setting, we use Proposition \ref{propcont} below. Proposition~\ref
{propcont} is part of the main convergence theorem in Neininger and Sulzbach \cite
{NeSu2011a}. We first state conditions needed to deal with the general
recurrence~(\ref{eqrecgen}); we will then justify that it can indeed
be used in the case of cost of partial match queries. Consider the
following assumptions, where, for a random variable $X$ in $\Do$ we
write $\|X\|_2:= \Ec{\|X\|^2}^{1/2}$, for a linear operator $A$ we
write $\|A\|_2:=\Ec{\|A\|_\mathrm{op}^2}^{1/2}$ with $\| A\|_\mathrm
{op}:= \sup_{\|x\|=1} \|A(x)\|$. Suppose $(X_n)$ obeys~(\ref
{eqrecgen}) and the following:
\begin{longlist}[(A3)]
\item[(A1)] \textsc{Convergence and contraction.}
We have $\|A_r^{(n)}\|_2, \|b^{(n)}\|_2<\infty$ for all $r=1,\ldots,K$
and $n\ge0$ and there exist random continuous linear operators $A_1,
\ldots, A_K$ on $\Do$ and a $\Do$-valued random variable $b$ such that,
for some positive sequence $R(n) \downarrow0$, as $n\to\infty$,
%
\begin{equation}
\label{eqrate1} \bigl\|b^{(n)} - b\bigr\|_2+\sum
_{r=1}^K \bigl\|A_r^{(n)} -
A_r\bigr\|_2= O \bigl(R(n) \bigr)
\end{equation}
and for all $\ell\in\N$,
\[
\mathbf{E} \bigl[\mathbf{1}_{\{I^{(n)}_r \in\{0,\ldots,\ell\}\}} \bigl\|A^{(n)}_r
\bigr\|_\mathrm {op}^2 \bigr] \to0
\]
and
%
\begin{equation}
L^{*} = \limsup_{n \rightarrow\infty} \mathbf{E} \Biggl[\sum
_{r=1}^K\bigl \|A_r^{(n)}
\bigr\|_\mathrm{op}^2 \frac
{R(I_r^{(n)})}{R(n)} \Biggr] < 1.
\end{equation}

\item[(A2)] \textsc{Existence and equality of moments.} $\Ec{\|X_n\|
^2} < \infty$ for all $n$ and $\Ec{X_{n_1}(t)} = \Ec{X_{n_2}(t)}$ for
all $n_1,n_2 \in\N_0,
t\in[0,1]$.
\item[(A3)] \textsc{Existence of a continuous solution.} There
exists a solution $X$ of the fixed-point equation
%
\begin{equation}
\label{fix} X \stackrel{d} {=} \sum_{r=1}^K
A_r \bigl(X^{(r)} \bigr) + b
\end{equation}
with continuous paths, $\Ec{\|X\|^2} < \infty$ and $\Ec{X(t)} = \Ec
{X_1(t)}$ for all $t\in[0,1]$.
Again the random variables $(A_1, \ldots, A_K,b), X^{(1)}, \ldots,
X^{(K)}$ are independent and $X^{(1)}, \ldots, X^{(K)}$ are distributed
like $X$.
\item[(A4)] \textsc{Perturbation condition.} $X_n = W_n + h_n$
where $ \|h_n - h \| \to0$ with $h\in\Co$ and random variables $W_n$
in $\Do$ such that there exists a sequence $(r_n)$ with, as $n\to\infty$,
\[
\mathbf{P} \bigl(W_n \notin\Drn \bigr) \to0.
\]
Here, $\Drn\subset\Do$ denotes the set of functions on the unit
interval continuous at $1$, for which there is a decomposition of
$[0,1]$ into intervals of length as least $r_n$ on which they are
constant.\vadjust{\goodbreak}
\item[(A5)] \textsc{Rate of convergence.} $R(n) =o (\log
^{-2}(1/r_n) )$.
\end{longlist}

The contraction method presented here for the space $(\Do,d)$ is based
on the Zolotarev metric $\zeta_2$; see \cite{NeSu2011a}. We state the
part of the main
convergence theorem of Neininger and Sulzbach \cite{NeSu2011a} that we will use. In the next
section, we will prove our main result, Theorem~\ref{thmprocess}, with
the help of
Proposition~\ref{propcont}.

\begin{prop} \label{propcont}
Let $(X_n)$ fulfill (\ref{eqrecgen}). Provided that assumptions
\textup{(A1)--(A3)} are satisfied, the solution $X$ of the fixed-point equation
(\ref{fix}) is unique.
\begin{longlist}[(iii)]
\item[(i)] For all $t\in[0,1]$, $X_n(t) \to X(t)$ in distribution, with
convergence of the first two moments.
\item[(ii)] If $\xi$ is independent of $(X_n), X$ and distributed on
$[0,1]$, then $X_n(\xi)\to X(\xi)$ in distribution again with
convergence of the first two moments.
\item[(iii)] If also \textup{(A4)} and \textup{(A5)} hold, then $X_n \rightarrow X$ in
distribution in $(\Do, d)$.
\end{longlist}
\end{prop}
Note that $X_n \rightarrow X$ in distribution in $(\Do, d)$ with $X$
having continuous sample paths implies that we can find versions of
$(X_n),X$ on a suitable probability space such that
$\| X_n - X\| \rightarrow0$ almost surely. However, in general we do
not have $X_n \rightarrow X$ in distribution in $\Do$ endowed with the
uniform topology due to problems with
measurability; see \cite{Billingsley1999}, Section 15 and \cite{NeSu2011a}, Section
2.2.

\subsection{\texorpdfstring{The functional limit theorem: Proof of Theorem~\protect\ref{thmprocess}}{The functional limit theorem: Proof of Theorem 1}}
The aim of this section is to prove Theorem~\ref{thmprocess} with the
help of Proposition~\ref{propcont} from Neininger and Sulzbach~\cite{NeSu2011a}. More
precisely, in the following we prove conditions (A1)--(A5), except two
which require much more work: the existence of a continuous solution~(A3), and the uniform convergence of the mean in (A1) are treated
separately in Sections~\ref{seclimit} and~\ref{secunifconvergence},
respectively.

Following the heuristics in the \hyperref[secintro]{Introduction} we scale the additive
recurrence~(\ref{eqCnrec}) by $n^{\beta}$. Let $Q_0(t):=0$ and
\[
Q_n(t) = \frac{C_n(t)} {K_1 n^\beta},\qquad n\ge1.
\]
The recursive distributional equation then rewrites in terms of $Q_n$ as
%
\begin{eqnarray}
\label{eqscalrec} &&\bigl( Q_n(t) \bigr)_{t \in[0,1]}\nonumber\\
&&\qquad \stackrel{{d}}
{=} \biggl( \I{t < U} \biggl[ \biggl(\frac{I_1^{(n)}}{n} \biggr)^{\beta}
Q_{I_1^{(n)}}^{(1)} \biggl( \frac{t}{U} \biggr) + \biggl(
\frac{I_2^{(n)}}{n} \biggr)^{\beta} Q_{I_2^{(n)}}^{(2)} \biggl(
\frac{t}{U} \biggr) \biggr]
\nonumber
\\[-8pt]
\\[-8pt]
\nonumber
&&\quad\qquad\hspace*{6pt}{} + \I{t \geq U} \biggl[ \biggl(\frac{I_3^{(n)}}{n} \biggr)^{\beta}
Q_{I_3^{(n)}}^{(3)} \biggl( \frac{t-U}{1-U} \biggr) + \biggl(
\frac{I_4^{(n)}}{n} \biggr)^{\beta} Q_{I_4^{(n)}}^{(4)} \biggl(
\frac
{t-U}{1-U} \biggr) \biggr]
\\
&&\hspace*{188pt}\quad\qquad\hspace*{30pt}\qquad{} + \frac{1}{K_1 n^{\beta}} \biggr)_{t \in[0,1]},\nonumber
\end{eqnarray}
where $U,I_1^{(n)},\ldots, I_4^{(n)}$ are the quantities already
introduced in Section~\ref{secoverviewcont} and~\eqref{eqsizes} and
$(Q^{(1)}_n)_{n \geq0}, \ldots, (Q^{(4)}_n)_{n \geq0}$ are
independent copies\vspace*{1pt} of $(Q_n)_{n \geq0}$,
independent of $(U,V,I_1^{(n)},\ldots, I_4^{(n)})$. The convergence of
the coefficients $(I_j^{(n)}/n )^{\beta}$ suggests
that a limit of $Q_n(t)$ should satisfy the fixed-point equation (\ref
{eqlimitprocess}).

\textsc{The recurrence relation.}
Most details consist in setting the right form of the recurrence
relation: for (A2) to be satisfied, we need to use a scaling that leads
to an expectation which is independent of $n$. This is not the case for
$Q_n(t)$. Denoting
$\mu_n(t) = \E{C_n(t)}$, we are naturally led to consider $Y_0(t):=0$ and
\[
Y_n(t) = \frac{C_{n}(t) - \mu_n(t)}{K_1 n^{\beta}} = Q_n(t) - h(t) + O
\bigl(n^{-\varepsilon} \bigr),\qquad n\ge1,
\]
where the error term is deterministic and uniform in $t \in[0,1]$.
Hence it is sufficient to prove convergence of the sequence
$(Y_n)_{n\ge1}$. The distributional recursion in terms of $Y_n$ is
\begin{eqnarray*}
&&\bigl( Y_n(t) \bigr)_{t \in[0,1]}\\
&&\qquad \stackrel{{d}} {=} \biggl( \I{t
< U} \biggl[ \biggl(\frac{I_1^{(n)}}{n} \biggr)^{\beta}
Y_{I_1^{(n)}}^{(1)} \biggl( \frac{t}{U} \biggr) + \biggl(
\frac
{I_2^{(n)}}{n} \biggr)^{\beta} Y_{I_2^{(n)}}^{(2)} \biggl(
\frac
{t}{U} \biggr) \biggr]
\\
&&\qquad\quad\hspace*{6pt}{} + \I{t \geq U} \biggl[ \biggl(\frac{I_3^{(n)}}{n} \biggr)^{\beta}
Y_{I_3^{(n)}}^{(3)} \biggl( \frac{t-U}{1-U} \biggr) + \biggl(
\frac
{I_4^{(n)}}{n} \biggr)^{\beta} Y_{I_4^{(n)}}^{(4)} \biggl(
\frac
{t-U}{1-U} \biggr) \biggr]
\\
&&\qquad\quad\hspace*{6pt}{} + \I{t < U} \biggl[\frac{\mu_{I_1^{(n)}} ({t}/{U} )+ \mu
_{I_2^{(n)}} ({t}/{U} )}{K_1 n^{\beta}} \biggr]
\\
&&\qquad\quad\hspace*{6pt}{} + \I{t \geq U} \biggl[ \frac{\mu_{I_3^{(n)}} ({(t-U)}/{(1-U)}
)+ \mu_{I_4^{(n)}} ({(t-U)}/{(1-U)} )}{K_1 n^{\beta}} \biggr] \\
&&\hspace*{223pt}\qquad\quad{}+ \frac{1 - \mu_n(t)}{K_1 n^{\beta}}
\biggr)_{t \in[0,1]},
\end{eqnarray*}
where $(Y_n^{(1)})_{n \geq0}, \ldots, (Y_n^{(4)})_{n \geq0}$ are
independent copies of $(Y_n)_{n \geq0}$ which are also independent of
the vector
$(U,V,I_1^{(n)},\ldots, I_4^{(n)})$.
Therefore, any possible limit $Y$ of $Y_n$ should satisfy the following
distributional fixed-point equation:
%
\begin{eqnarray}
\label{eqfix-mod} &&\hspace*{-5pt}\bigl( Y(t) \bigr)_{t \in[0,1]} \nonumber\\
&&\qquad\stackrel{{d}} {=} \biggl( \I{t <
U} \biggl[(UV)^{\beta} Y^{(1)} \biggl(\frac{t}{U} \biggr)
+ \bigl(U(1-V) \bigr)^{\beta} Y^{(2)} \biggl(\frac{t}{U}
\biggr) \biggr]
\nonumber
\\
&&\qquad\quad\hspace*{1pt}{} + \I{t \geq U} \biggl[ \bigl((1-U)V \bigr)^{\beta} Y^{(3)}
\biggl(\frac
{t-U}{1-U} \biggr)
\nonumber
\\[-8pt]
\\[-8pt]
\nonumber
&&\qquad\quad\hspace*{22pt}\qquad{}+ \bigl((1-U) (1-V) \bigr)^{\beta}
Y^{(4)} \biggl(\frac
{t-U}{1-U} \biggr) \biggr]
\\
&&\qquad\quad\hspace*{1pt}{} + \I{t \geq U} h \biggl(\frac{t-U}{1-U} \biggr) \bigl( \bigl((1-U)V
\bigr)^{\beta} + \bigl((1-U) (1-V) \bigr)^{\beta} \bigr) - h(t)
\nonumber
\\
&&\qquad\quad\hspace*{103pt}{}+ \I{t < U} h \biggl(\frac{t}{U} \biggr) \bigl( (UV)^{\beta} +
\bigl(U(1-V) \bigr)^{\beta} \bigr) \biggr)_{t \in[0,1]}.\nonumber
\end{eqnarray}
Having Proposition~\ref{propcont} in mind, we define (random)
operators $A_r^{(n)}$, $r=1,2,3,4$, by
\[
A_r^{(n)}(f) (t) = %
\cases{\displaystyle \I{t < U} \biggl(
\frac{I_r^{(n)}}{n} \biggr)^{\beta} f \biggl( \frac
{t}{U} \biggr), & \quad $\mbox{if } r = 1,2$,\vspace*{2pt}
\cr
\displaystyle\I{t \geq U} \biggl(\frac{I_r^{(n)}}{n}
\biggr)^{\beta} f \biggl( \frac
{t-U}{1-U} \biggr), & \quad $\mbox{if } r=
3,4.$} %
\]
Furthermore let $b^{(n)}(t) = \sum_{r=1}^4 b^{(n)}_r(t) + (1 - \mu
_n(t))/(K_1 n^{\beta})$ with
\[
b^{(n)}_r(t) = %
\cases{ \displaystyle\I{t < U} \cdot
\frac{\mu_{I_r^{(n)}} ({t}/{U} )
}{K_1n^{\beta}}, &\quad  $\mbox{if } r=1,2,$\vspace*{2pt}
\cr
\displaystyle\I{t \geq U} \cdot
\frac{\mu_{I_r^{(n)}} ({(t-U)}/{(1-U)} )
}{K_1n^{\beta}}, &\quad $\mbox{if } r= 3,4.$} %
\]
Then the finite-$n$ version of the recurrence relation for $(Y_n)_{n\ge
0}$ is precisely of the form of \eqref{eqrecgen}.

We define similarly the coefficients of the limit recursive equation
\eqref{eqfix-mod}. We will then show that with these definitions,
assumptions (A1)--(A5) are satisfied (again, except the existence of a
continuous limit solution and the uniform convergence for the mean
treated in Section~\ref{seclimit} and~\ref{secunifconvergence}). The
operators $A_1, \ldots, A_4$ are defined by
\begin{eqnarray*}
A_1(f) (t) &=& \I{t < U} (
UV )^{\beta} f \biggl( \frac{t}{U} \biggr),\\
 A_2(f) (t)
 &=&\I{t < U} \bigl( U(1-V) \bigr)^{\beta} f \biggl( \frac{t}{U}
\biggr),
\\
A_3(f) (t) & = &\I{t \geq U} \bigl( (1-U)V \bigr)^{\beta} f
\biggl( \frac{t-U}{1-U} \biggr), \\
 A_4(f) (t) &=& \I{t \geq U} \bigl(
(1-U) (1-V) \bigr)^{\beta} f \biggl( \frac{t}{U} \biggr)
\end{eqnarray*}
and $b(t) = \sum_{r=1}^4 b_r(t) - h(t)$ with
\begin{eqnarray*}
b_1(t) & =& \I{t < U} (
UV )^{\beta} h \biggl( \frac{t}{U} \biggr),\\
 b_2(t)& =&
\I{t < U} \bigl( U(1-V) \bigr)^{\beta} h \biggl( \frac{t}{U}
\biggr),
\\
b_3(t) & = &\I{t \geq U} \bigl( (1-U)V \bigr)^{\beta} h
\biggl( \frac
{t-U}{1-U} \biggr),\\
  b_4(t) &=& \I{t \geq U} \bigl(
(1-U) (1-V) \bigr)^{\beta} h \biggl( \frac{t}{U} \biggr).
\end{eqnarray*}
The operators $A_1, \ldots, A_4, A_1^{(n)}, \ldots, A_4^{(n)}$ are
linear for each $n$. Moreover, they are bounded above by
one, which implies that they are norm-continuous.
Their norm functions are real-valued random variables. In order to
establish that they are indeed random continuous
linear operators on $(\Do, d)$ it remains to check that they are
continuous with respect to the Skorokhod topology. To this end, it is sufficient
to prove that
\[
d(f_n,f) \rightarrow0\quad \Rightarrow\quad d \biggl(\I{t < u} f_n
\biggl(\frac tu \biggr),\I{t < u} f \biggl( \frac tu \biggr) \biggr) \rightarrow0
\]
for any $u \in[0,1]$.
This follows
easily since $\|f_n(\lambda_n(t))-f(t) \| \rightarrow0$ with
monotonically increasing bijections $\lambda_n$ on the unit interval
such that $\|\lambda_n(t) -t \| \rightarrow0$
implies $\| \I{\beta_n(t) < u} f_n(\beta_n(t)/u)-\I{t <u} f(t/u)\|
\rightarrow0$ where $\beta_n(t) = u   \lambda_n(t/u)$ for $t \leq u$
and $\beta_n(t) = t$ for $t > u$.

We are now ready to check that assumptions (A1)--(A5) indeed hold,
taking the results of Sections~\ref{seclimit} and~\ref
{secunifconvergence} for granted.

(A3) \textsc{Existence of a continuous solution.} In Section~\ref
{seclimit}, we construct a continuous solution $Z$ of the fixed-point
equation (\ref{eqlimitprocess}) with $\Ec{\|Z\|^2} < \infty$ and
$\Ec{Z(t)} = h(t)=(t(1-t))^{\beta/2}$. Hence the function $Y(t) = Z(t)
- h(t)$ is a continuous solution of (\ref{eqfix-mod}) with $\Ec{Y(t)}
= 0$ and
$\Ec{\|Y\|^2} < \infty$. A direct computation shows that $\Ec{\|A_r\|
_\mathrm{op}^2} = \Ec{(UV)^{2\beta}} = (2 \beta+1)^{-2}$, for
$r=1,\ldots, 4$. Observe that
\[
L:= \sum_{r=1}^4 \mathbf{E} \bigl[
\|A_r\|_\mathrm{op}^2\bigr] = \frac{4}{(2 \beta+1)^2}
< 1.
\]
In particular, $Y$ is the unique solution of (\ref{eqfix-mod}) with $\E
{Y(t)} = 0$ and $\Ec{\|Y\|^2} < \infty$. Thus, $Z$ is the unique
solution of \eqref{eqlimitprocessa} with $\Ec{Z(t)} = h(t)$ and $\Ec
{\|Z\|^2} < \infty$. By the arguments in \cite{CuJo2010}, Section 5,
the mean function of any process with c{\`a}dl{\`a}g paths and finite
moments satisfying \eqref{eqlimitprocessa} is a multiple of $h(s)$.
Hence, we may replace the condition $\E{Z(t)} = h(t)$ by $\E{Z(\xi)} =
\Gamma(\beta/2 + 1)^2/ \Gamma(\beta+2)$ as formulated in Proposition
\ref{propchar}.

(A2) \textsc{Existence and equality of moments.} The precise
scaling we chose ensures that $\Ec{Y_n(t)}=0$, for all $n\ge1$ and
$t\in[0,1]$. The second moments $\Ec{\|Y_n\|^2}$ are finite as the
random variables $\|Y_n\|$ are bounded for every fixed $n$.

(A1) \textsc{Convergence and contraction.} It suffices to focus on the terms
\[
\bigl\|A_1^{(n)}-A_1\bigr\|_2 \quad\mbox{and}\quad
\bigl\|b^{(n)}_1 - b_1\bigr\|_2,
\]
and the remaining terms can obviously be treated in the same way.
Establishing the convergence only boils down to verifying that a
binomial random variable Bin$(n,p)$ is properly approximated by $np$.
Using the Chernoff--Hoeffding inequality for binomials \cite
{Hoeffding1963}, one easily verifies that for every $\alpha>0$,
%
\begin{equation}
\label{eqalphamomentbin} \mathbf{E} \biggl[ \biggl\llvert \frac{\operatorname{Bin}(n,p)}{n} - p
\biggr\rrvert ^{\alpha} \biggr] = O \bigl(n^{-
\alpha/2} \bigr),
\end{equation}
uniformly in $p \in[0,1]$. Thus, since $|x^\beta-y^\beta|\le
|x-y|^\beta$ for any $x,y\in[0,1]$, we have
%
\begin{equation}
\label{eqbounderrA} \bigl\|A_1^{(n)} - A_1
\bigr\|_2 \leq \biggl\llVert \biggl(\frac{I_r^{(n)}}{n}
\biggr)^{\beta} - (UV)^{\beta} \biggr\rrVert _2 = O
\bigl( n^{-1/2} \bigr).
\end{equation}
By Proposition~\ref{PROPUNIFORMCONVERGENCE} we have $\mu_n(t) = K_1
h(t)n^{\beta} + O (n^{\beta- \varepsilon})$ uniformly in $t \in[0,1]$.
Therefore
\[
\bigl\| b_1^{(n)} - b_1 \bigr\|_2 \leq \biggl
\llVert \I{t < U} h \biggl(\frac{t}{U} \biggr) \biggl( \biggl(
\frac{I_r^{(n)}}{n} \biggr)^{\beta} - (UV)^{\beta} \biggr) \biggr
\rrVert _2 + C \biggl\llVert \frac{(I_1^{(n)})^{\beta- \varepsilon}}{n^{\beta}} \biggr\rrVert
_2
\]
for some constant $C > 0$. Since $h$ is bounded, the first summand is
$O( n^{-1/2})$ just like in \eqref{eqbounderrA} above.
The second term is trivially bounded by $C n^{-\varepsilon}$. Overall,
we have $\|b_1^{(n)} - b_1\|_2 = O(n^{-\varepsilon})$. Hence, since the
coefficients
$A_r^{(n)}$ are bounded by one in the operator norm and by
distributional properties of $I_1^{(n)}, \ldots, I_4^{(n)}$, the first
two constraints in assumption (A1) are satisfied with $R(n) = C
n^{-\varepsilon}$ for a suitable constant $C > 0$, and $\varepsilon>0$
may still be chosen as small as we want.

Next, we consider $L^*$ in (A1). By dominated convergence we have
\[
L^{*}  = 4 \mathbf{E} \bigl[(UV)^{2\beta} (UV)^{-\varepsilon}
\bigr] = \frac{4}{(2\beta-
\varepsilon+1)^2} < 1
\]
for $\varepsilon>0$ sufficiently small. This completes the verification
of (A1).

(A4) \textsc{Perturbation condition.} Note that $Q_n$ is
piecewise constant: $Q_n(t) = Q_n(s)$ for all $s,t$ if no
$x$-coordinate of the first $n$ points lies between $s$ and $t$. There
are $n$ independent points,
the probability that there exist two lying within $n^{-3}$ of each
other is at most $n^{-1}$. So (A4) is satisfied with $r_n = n^{-3}$.

(A5) \textsc{Rate of convergence.} With $r_n=n^{-3}$ and $R_n=C
n^{-\varepsilon}$, we have $R_n=o(\log^{-2} n)=o(\log^{-2}(1/ r_n))$.
Therefore, the condition on the rate of convergence is satisfied.

\section{The limit process}\label{seclimit}

In this section, we prove the existence of a process $Z\in\mathcal
C[0,1]$, the space of continuous functions from $[0,1]$ into $\R$, that
satisfies
the distributional fixed point equation \eqref{eqlimitprocess} and
whose mean matches the mean of the rescaled version $Y_n(s)$ of
$C_n(s)$. We construct the process $Z$ as the point-wise limit of
martingales. We then show that the convergence is actually almost
surely uniform, which allows us
to conclude that $Z\in\mathcal C[0,1]$ with probability one.
Figure~\ref{figlimitprocess} shows a simulation of the process $Z$.

We identify the nodes of the infinite quaternary tree with the set of
finite words on the alphabet $\{1,2,3,4\}$,
\[
\mathcal T = \bigcup_{n\ge0}\{1,2,3,4
\}^n.
\]
For a node $u\in\mathcal T$, we write $|u|$ for its depth, that is,
the distance between $u$ and the root
$\varnothing$. The descendants of $u\in\mathcal T$ correspond to all
the words in $\mathcal T$ with prefix~$u$; in particular, the children
of $u$ are $u1, \ldots, u4$. Let $\{U_v, v\in\mathcal T\}$
and $\{V_v, v\in\mathcal T\}$ be two independent families of i.i.d.
$[0,1]$-uniform random variables. By $\mathcal C_0[0,1]$ we denote the
set of continuous
functions on the unit interval vanishing at the boundary, that is,
$f(0) = f(1)= 0$ for $f \in\mathcal C_0[0,1].$
Define the continuous operator $G\dvtx (0,1)^2 \times\mathcal C_0[0,1]^4
\to\mathcal C_0[0,1]$ by
%
\begin{eqnarray}\label{eqdefG}
&& G(x,y,f_1,f_2,f_3,f_4) (s) \nonumber\\
&&\qquad=
\I{s<x} \biggl[(xy)^\beta f_1\biggl(\frac sx\biggr)+
\bigl(x(1-y) \bigr)^\beta f_2\biggl(\frac sx\biggr) \biggr]
\\
&&\qquad\quad{}+\I{s\ge x} \biggl[ \bigl((1-x)y \bigr)^\beta f_3\biggl(
\frac{s-x}{1-x}\biggr)+ \bigl((1-x) (1-y) \bigr)^\beta
f_4 \biggl(\frac{s-x}{1-x}\biggr) \biggr].\nonumber
\end{eqnarray}
Recall the definition of $h$ in \eqref{defh}.
For every node $u\in\mathcal T$, let $Z_0^{u}=h$. Then define
recursively
%
\begin{equation}
\label{defrecZ} Z_{n+1}^u=G \bigl(U_u,
V_u, Z_n^{u1}, Z_n^{u2},
Z_n^{u3}, Z_n^{u4} \bigr).
\end{equation}
Finally, define $Z_n=Z_n^\varnothing$ to be the value observed at the
root of $\mathcal T$ when the iteration has been started with $h$ in
all the nodes
at level $n$. We will see that for every $s\in[0,1]$, the sequence
$(Z_n(s), n\ge0)$ is a nonnegative discrete time martingale; so it
converges with
probability one to a finite limit.

It will be convenient to have an explicit representation for $Z_n$. For
$s\in[0,1]$, $Z_n(s)$ is the sum of exactly $2^n$ terms, each one
being the
contribution of one of the boxes at level $n$ that is cut by the line
at $s$. Let $\{Q_i^n(s), 1\le i\le2^n\}$ be the set of rectangles at
level $n$
whose first coordinate intersect $s$. Suppose that the projection of
$Q_i^n(s)$ on the first coordinate yields the interval $[\ell_i^n,
r_i^n]$. Then
%
\begin{equation}
\label{Znexplicit} Z_n(s)=\sum_{i=1}^{2^n}
\vol \bigl(Q_i^n(s) \bigr)^\beta\cdot h
\biggl( \frac{s-\ell
_i^n}{r_i^n-\ell_i^n}\biggr),
\end{equation}
where $\vol(Q_i^n(s))$ denotes the volume of the rectangle $Q_i^n(s)$.
The difference between $Z_n$ and $Z_{n+1}$ only relies in what happens
inside the boxes
$Q_i^n(s)$: We have
%
\begin{eqnarray}
\label{eqZn-telescoping} &&Z_{n+1}(s)-Z_n(s)
\nonumber
\\[-4pt]
\\[-12pt]
\nonumber
&&\qquad=\sum
_{i=1}^{2^n} \vol \bigl(Q_i^n(s)
\bigr)^\beta\cdot \biggl[G \bigl(U_i',V_i',h,h,h,h
\bigr) \biggl(\frac{s-\ell_i^n}{r_i^n-\ell_i^n}\biggr)-h\biggl(\frac{s-\ell_i^n}{r_i^n-\ell_i^n}\biggr)
\biggr],\hspace*{-35pt}
\end{eqnarray}
where $U'_i,V'_i$, $1\le i\le2^n$ are i.i.d. $[0,1]$-uniform random
variables. In fact, $U_i'$ and $V_i'$ are some of the variables $U_u,
V_u$ for nodes $u$ at
level $n$. Observe that, although the area $\vol(Q_i^n(s))$ is \emph
{not} a product of $n$ independent terms of the form $U V$ because of
size-biasing, but $U'_i, V'_i$ are in
fact \emph{unbiased}, that is, uniform. Let $\mathscr F_n$ denote the
$\sigma$-algebra generated by $\{U_u, V_u\dvtx |u|< n\}$.
Then the family $\{U_i',V_i'\dvtx 1\le i\le2^n\}$ is independent of
$\mathscr F_n$.

So, to prove that $Z_n(s)$ is a martingale, it suffices to prove that,
for $1\le i\le2^n$,
\[
{\mathbf{E}\biggl[ G \bigl(U_i',V_i',h,h,h,h
\bigr) \biggl(\frac{s-\ell_i^n}{r_i^n-\ell
_i^n}\biggr) \Big| \mathscr F_n \biggr]} =h
\biggl( \frac{s-\ell_i^n}{r_i^n-\ell_i^n}\biggr).
\]
Since $U_i',V_i', 1\le i\le2^n$ are independent of $\mathscr F_n$,
this clearly reduces to the following lemma.
%
\begin{lem}\label{lemmartingale}For the operator $G$ defined in \eqref
{eqdefG} and $U,V$ two independent $[0,1]$-uniform random variables
and any $s\in[0,1]$, we have
\[
\mathbf{E} \bigl[G(U,V,h,h,h,h) (s) \bigr] =h(s).
\]
\end{lem}
\begin{pf}Since $V$ and $1-V$ have the same distribution, we have
\begin{eqnarray*}
\mathbf{E} \bigl[G(U,V,h,h,h,h) (s) \bigr]& = &2 \mathbf{E} \biggl[\I{s<U}
(UV)^\beta h\biggl(\frac s U\biggr) \biggr]
\\
&&{}+2 \mathbf{E} \biggl[\I{s\ge U} \bigl((1-U)V \bigr)^\beta h\biggl(
\frac{1-s}{1-U}\biggr) \biggr].
\end{eqnarray*}
Similarly, since $U$ and $1-U$ are both uniform, we clearly have
\[
\mathbf{E} \bigl[G(U,V,h,h,h,h) (s) \bigr] =f(s)+f(1-s),
\]
where we wrote $f(s)=2\E{\I{s<U} (UV)^\beta h(s/U)}$. To complete the
proof, it suffices to compute $f(s)$. We have
\begin{eqnarray*}
f(s)= \mathbf{E} \biggl[\I{s<U} (UV)^\beta h\biggl(\frac s U\biggr)
\biggr] &=&\frac2 {\beta+1} \mathbf{E} \bigl[\I{s<U} s^{\beta/2}(U-s)^{\beta/2}
\bigr]
\\
&=&\frac2 {\beta+1}s^{\beta/2} \int_s^1
(x-s)^{\beta/2}\,dx
\\
&=&\frac4 {(\beta+1) (\beta+2)} s^{\beta/2}(1-s)^{\beta/2+1}
\\
&=&(1-s)h(s),
\end{eqnarray*}
where the last line follows since $(\beta+1)(\beta+2)=4$ by definition
of $\beta$. The result follows readily.
\end{pf}
Our aim is now to prove the following proposition:

\begin{prop}\label{propcontinuouspaths} With probability one $Z_n$
converges uniformly to some continuous limit process $Z$ on $[0,1]$.
\end{prop}

Assume for the moment that there exist constants $a,b\in(0,1)$ and $C$
such that
%
\begin{equation}
\label{eqXborel-cantelli}\mathbf{P}\Bigl(\sup_{s\in[0,1]}\bigl|Z_{n+1}(s)-Z_n(s)\bigr|
\ge a^n\Bigr)
\le C\cdot b^n.
\end{equation}
Then, by the Borel--Cantelli lemma,
the sequences $Z_n$ is almost surely Cauchy with respect to the
supremum norm. Completeness of
$ ( \mathcal C[0,1], \|\cdot\| )$ yields the existence of a
random process $Z$ with continuous paths
such that $Z_n \rightarrow Z$ uniformly on
$[0,1]$.
We now move on to showing that there exist constants $a$ and $b$ such
that~(\ref{eqXborel-cantelli}) is satisfied.
We start by a bound for a fixed value $s\in[0,1]$. We will then handle
the supremum using a sieve of the interval $[0,1]$ by a large enough
number of
deterministic points.

\begin{lem}\label{lemboundfixeds}For every $s\in[0,1]$, any $a\in
(0,1)$, and any integer $n$ large enough, we have the bound
\[
\mathbf{P}\bigl(\bigl|Z_{n+1}(s)-Z_n(s)\bigr|\ge a^n\bigr)
\le4 \bigl(16e
\log(1/a) \bigr)^n.
\]
\end{lem}
\begin{pf}
We use the representation \eqref{eqZn-telescoping}. As we have
already pointed out earlier (Lemma~\ref{lemmartingale}), for every
single rectangle $Q_i^n(s)$ at level $n$, we have
\[
\mathbf{E}\biggl[G \bigl(U_i',V_i',h,h,h,h
\bigr) \biggl(\frac{s-\ell_i^n}{r_i^n-\ell
_i^n}\biggr)-h\biggl(\frac{s-\ell_i^n}{r_i^n-\ell_i^n}\biggr) \Big|
\mathscr F_n \biggr]=0.
\]
Since $h(x)\le2^{-\beta}$ for $x\in(0,1)$, conditional on $\mathscr
F_n$, $Z_{n+1}-Z_n$ is a sum of $2^n$ \emph{centered}, \emph{bounded}
and moreover \emph{independent} terms (but not identically
distributed). Moreover, conditional on $\mathscr F_n$, the term
corresponding to $Q_i^n(s)$ in \eqref{eqZn-telescoping} is bounded by
%
\begin{eqnarray}
\label{boundZn} \vol \bigl(Q_i^n \bigr)^\beta
\cdot\bigl\| G \bigl(U_i',V_i',
h, h, h,h \bigr)-h\bigr\| &\le&\vol \bigl(Q_i^n
\bigr)^\beta2 \|h\|
\nonumber
\\[-8pt]
\\[-8pt]
\nonumber
&=& \vol \bigl(Q_i^n \bigr)^\beta2^{1-\beta}.
\end{eqnarray}
So when conditioning on $\mathscr F_n$, one can bound the variations of
$Z_{n+1}-Z_n$ using the Chernoff--Hoeffding inequality \cite
{Hoeffding1963}. We have
%
\begin{eqnarray}
\label{hoeff}
\mathbf{P}\bigl(\bigl|Z_{n+1}(s)-Z_n(s)\bigr|>a^n\bigr)
&=&
\mathbf{E} \bigl[\mathbf{P} \bigl( {\bigl|Z_{n+1}(s)-Z_n(s)\bigr|>a^n}
| {\mathscr F_n} \bigr) \bigr]
\nonumber
\\
&\le& \mathbf{E} \biggl[2\exp\biggl(-\frac{a^{2n}}{\sum_{i=1}^{2^n} \vol
(Q_i^n(s))^{2\beta}}\biggr) \biggr]
\\
&\le& 2\exp\bigl(-a^{-2n}\bigr)+2
\mathbf{P}\Biggl(\sum_{i=1}^{2^n}
\vol \bigl(Q_i^n(s) \bigr)^{2\beta}>a^{4n}\Biggr);\nonumber
\end{eqnarray}
the precise constant in the exponent in the second inequality can be
taken to be one since it is the case that $2/(2^{1-\beta})^2>1$.

Now, since $2\beta>1$ and all the volumes $\vol(Q_i^n(s))$ are at most
one, we have
%
\begin{eqnarray}
\label{tube}
\mathbf{P}\Biggl(\sum_{i=1}^{2^n} \vol
\bigl(Q_i^n(s) \bigr)^{2\beta}>a^{4n}\Biggr)
 &
\le&
\mathbf{P}\Biggl(\sum_{i=1}^{2^n} \vol
\bigl(Q_i^n(s) \bigr)> a^{4n}\Biggr)
\nonumber
\\[-8pt]
\\[-8pt]
\nonumber
&\le&
\mathbf{P}\bigl(W_n > a^{4n}\bigr),
\end{eqnarray}
where $W_n$ denotes the maximum width of any of the $4^n$ cells at
level $n$. Indeed, the volume occupied by all rectangles $Q_i^n(s)$,
$1\le i\le2^n$ together is at most that of a vertical tube of width
$W_n$. Putting together \eqref{hoeff} and \eqref{tube}, it follows that
\begin{eqnarray*}
\mathbf{P}\bigl(\bigl|Z_{n+1}(s)-Z_n(s)\bigr|\ge a^n\bigr)
 &\le&2\exp
\bigl(-a^{-2n} \bigr) + 2
\mathbf{P}\bigl(W_n> a^{4n}\bigr)
\\
&\le&2\exp \bigl(-a^{-2n} \bigr)+2 \bigl(16e\log(1/a)
\bigr)^n
\\
&\le&4 \bigl(16e\log(1/a) \bigr)^n
\end{eqnarray*}
for all $n$ large enough using Lemma \ref{lemapp1} from the \hyperref[app]{Appendix}.
\end{pf}

Now that we have good control on pointwise variations of $Z_{n+1}-Z_n$,
we move on to the supremum on $[0,1]$. Consider the set $V_n$ of
$x$-coordinates of the vertical boundaries of all the rectangles at
level $n$. Let $L_n=\inf\{|x-y|\dvtx  x,y\in V_n\}$. Suppose that $1/\gamma$
is an integer. Then we have
\begin{eqnarray*}
&&\sup_{s\in[0,1]}\bigl |Z_{n+1}(s)-Z_n(s)\bigr|
\\
&&\qquad\le\sup_{1\le i\le\gamma^{-(n+1)}}\bigl |Z_{n+1} \bigl(i \gamma
^{n+1} \bigr)-Z_n \bigl(i\gamma^{n+1} \bigr)\bigr|\\
&&\qquad\quad{}+2\sup
_{ m \in\{n,n+1\}}\sup_{|s-t|\le
\gamma^{n+1}} \bigl|Z_{m}(s)-Z_{m}(t)\bigr|.
\end{eqnarray*}
We first deal with the second term, and suppose that we are on the event
that $L_{n+1}\ge(4\gamma)^{n+1}$. Observe that the sieve we used,
$\gamma^n$, is much finer than the shortest length of a cell at level
$n+1$ which is at least $L_{n+1}$. We use the representation in \eqref
{Znexplicit}; for $|t-s|\le\gamma^{n+1}$, the two collections $\{
Q_i^n(s), 1\le i\le2^n\}$ and $\{Q_i^n(t), 1\le i\le2^n\}$ differ at
most on one cell. We obtain, for any $|s-t|\le\gamma^{n+1}$,
\begin{eqnarray*}
&&\bigl|Z_n(s)-Z_n(t)\bigr|\\
&&\qquad\le\sum_{i=1}^{2^n}
\vol \bigl(Q_i^n(s) \bigr)^\beta\cdot \biggl
\llvert h\biggl(\frac
{s-\ell_i^n}{r_i^n-\ell_i^n}\biggr)-h\biggl(\frac{t-\ell_i^n}{r_i^n-\ell
_i^n}\biggr) \biggr
\rrvert +2\max_{i}\vol \bigl(Q_i^n(s)
\bigr)^\beta
\\
&&\qquad\le\sum_{i=1}^{2^n} \vol
\bigl(Q_i^n(s) \bigr)^\beta
\cdot4^{-\beta n}+2\max_{i}\vol \bigl(Q_i^n(s)
\bigr)^\beta
\\
&&\qquad\le3 W_n^\beta.
\end{eqnarray*}
Here, the second inequality follows from the facts that $|h(t)-h(s)|\le
|t-s|^\beta$ for any $s,t\in[0,1]$ and that
$ L_n\ge(4\gamma)^{n+1}$. The same upper bound is valid for
$|Z_{n+1}(s) - Z_{n+1}(t)|$ for $|s-t| \leq\gamma^{n+1}$.
In particular, it follows by the union bound that, for any $\gamma\in
(0,1)$ (with $1/\gamma$ an integer),
%
\begin{eqnarray}
\label{eqsupdiffX} &&
\mathbf{P}\Bigl(\sup_{s\in[0,1]} \bigl|Z_{n+1}(s)-Z_n(s)\bigr|
\ge 2 a^n\Bigr)
\nonumber\\
&&\qquad \le \gamma^{-n}\sup_{s\in[0,1]}
\mathbf{P}\bigl(\bigl|Z_{n+1}(s)-Z_n(s)\bigr|\ge a^n\bigr)
\\
&&\qquad\quad{}+
\mathbf{P}\bigl(L_{n+1}<(4\gamma)^{n+1}\bigr)
+
\mathbf{P}\bigl(12 W_n^\beta>a^n\bigr).\nonumber
\end{eqnarray}
We are now ready to complete the proof of Proposition~\ref
{propcontinuouspaths}. From (\ref{eqsupdiffX}) and Lemma~\ref
{lemclosestpair} from the \hyperref[app]{Appendix},
we have
\begin{eqnarray*}
\mathbf{P}\Bigl(\sup_{s\in[0,1]} \bigl|Z_{n+1}(s)-Z_n(s)\bigr|\ge2
a^n\Bigr)
 &\le& 4 \bigl(16 e \gamma^{-1} \log(1/a)
\bigr)^n+ 6 \cdot16^{n} \gamma^{n/201}
\\
&&{}+ \bigl(4e\log \bigl(12^{1/n} /a \bigr)/\beta \bigr)^n
\end{eqnarray*}
for all $\gamma< \gamma_0 /4$ and $n \geq n_0(\gamma,a)$. Now, first choose
$a <1$ sufficiently close to $1$ such that we also have $16 (e \log
(1/a))^{1/202} < 1/4$ and then $\gamma> 0$ such that
$1/\gamma$ is an integer and $\gamma^{1/201} \leq e\gamma^{-1} \log(1/a)$.

It follows that, for $n$ sufficiently large,
\[
\mathbf{P}\Bigl(\sup_{s\in[0,1]}\bigl |Z_{n+1}(s)-Z_n(s)\bigr|\ge2
a^n\Bigr)
 \leq11 \cdot4^{-n}.
\]
Increasing $a<1$ and $C$ ensures that \eqref{eqXborel-cantelli} holds
with $b=1/4$ for all $n\ge1$.
The functions $Z_n^{1}, \ldots, Z_n^{4}$ at the four children of the
root are each distributed as $Z_{n-1}$, and they also converge
uniformly to continuous limits denoted $Z^{(1)}, \ldots, Z^{(4)}$. The
random functions $Z^{(1)}, \ldots, Z^{(4)}$ are independent and
distributed as $Z$. Equation~(\ref{defrecZ}) and independence imply
\begin{eqnarray*}
Z(s)& = &\I{s<U} \biggl[(UV)^\beta Z^{(1)}\biggl(\frac s U
\biggr)+ \bigl(U(1-V) \bigr)^\beta Z^{(2)}\biggl(\frac s U
\biggr)\biggr]
\\
&&{}+\I{s\ge U} \biggl[ \bigl((1-U)V \bigr)^\beta Z^{(3)}
\biggl( \frac{s-U}{1-U}\biggr)\\
&&\qquad\qquad{}+ \bigl((1-U) (1-V) \bigr)^\beta
Z^{(4)}\biggl( \frac{s-U}{1-U}\biggr)\biggr],
\end{eqnarray*}
almost surely, considered as random continuous paths. In particular,
the distribution of $Z$ solves the distributional fixed-point equation
(\ref{eqlimitprocess}).

Finally, we look at the moments of $\| Z_n\|=\sup_{s\in[0,1]}|Z_n(s)|$
and $\|Z\|=\sup_{s\in[0,1]}|Z(s)|$.
%
\begin{prop}\label{promomentslimit}For every $p\ge1$, we have $\Ec{\|
Z\|^p}<\infty$ and $\|Z_n-Z\|\to0$ in $L^p$.
\end{prop}
\begin{pf}
Let $\Delta(x) = \Prob{\| Z_{n+1} - Z_n \| \geq x}$
and $a < 1, C > 0$ such that \eqref{eqXborel-cantelli} is satisfied
with $b = 1/4$.
Then, by \eqref{eqZn-telescoping} and the upper bound $\eqref
{boundZn}$, we have
%
\begin{equation}
\label{dingens} \qquad\mathbf{E} \bigl[\|Z_{n+1} - Z_n \| \bigr] = \int
_0^{\infty} \Delta_n(x) \,dx = \int
_0^{a^n} \Delta_n(x) \,dx + \int
_{a^n}^{2^{n+1}} \Delta_n(x) \,dx.
\end{equation}
The first summand is at most $a^n$, the second one at most $C \cdot
2^{-(n-1)}$ by \eqref{eqXborel-cantelli}.
Altogether, there exist $R > 0$ and $0 < q < 1$ with
\[
\mathbf{E}\bigl [\|Z_{n+1} - Z_n \| \bigr] \leq R q^n
\]
for all $n$.
Furthermore, for any $p \in\N$, our proof also provides \eqref
{eqXborel-cantelli} for a constant $C > 0$ and $b = 4^{-p}$ by
increasing the value of $a$.
Therefore, replacing $a^n$ and $2^{n+1}$ by $a^{np}$, respectively, $2^{(n+1)p}$
in \eqref{dingens} shows that the $p$th moment of $\|Z_{n+1} - Z_n \|$
is also exponentially small in $n$ for any $p > 1$.
Then, since $Z_n=h+\sum_{k=1}^n (Z_k-Z_{k-1})$, using Minkowski's inequality,
\[
\mathbf{E} \bigl[\|Z_n\|^p \bigr] ^{1/p} \leq
\sum_{k=1}^n \mathbf{E} \bigl[\llVert
Z_{k} - Z_{k-1} \rrVert ^p
\bigr]^{1/p} + \|h\|,
\]
which is uniformly bounded in $n$. It follows that $\Ec{\| Z\|^p} <
\infty$ for all $p\ge1$, and that
$\Ec{\|Z_n - Z \|^p}\to0$ as $n\to\infty$.
\end{pf}
%

\section{Uniform convergence of the mean}\label{secunifconvergence}

The proof that assumption (A1) holds for Proposition~\ref{propcont}
requires that we show convergence of the first moment $n^{-\beta}\E
{C_n(s)}$ toward $\mu_1(s)=K_1 h(s)$ uniformly on $[0,1]$. Note that,
since $C_n(s)$ is continuous at any fixed $s\in[0,1]$ almost surely,
the function $s \to\Ec{C_n(s)}$ is continuous for any $n$.
Curien and Joseph \cite{CuJo2010} only show point-wise convergence, and proving uniform
convergence requires a good deal of additional arguments.
Unfortunately, a good portion of the work consists of a tedious
tightening of the strategy developed in~\cite{CuJo2010}.
%
\begin{prop}\label{PROPUNIFORMCONVERGENCE}There exists $\varepsilon
>0$ such that
\[
\sup_{s \in[0,1]} \bigl|n^{-\beta} \mathbf{E} \bigl[C_n(s)
\bigr] - \mu_1(s)\bigr | = O \bigl(n^{-\varepsilon} \bigr).
\]
In other words, $n^{-\beta}\Ec{C_n(s)}$ converges uniformly to $\mu_1$
on $[0,1]$ with polynomial rate.
\end{prop}

We prove a Poissonized version. Since $C_n(s)$ is increasing in $n$ for
every fixed~$s$, the de-Poissonization only relies on routine arguments
based on concentration for Poisson random variables, and we omit the
details. Consider a Poisson point process with unit intensity on
$[0,1]^2\times[0,\infty)$. The first two coordinates represent the
location inside the unit square; the third one represents the time of
arrival of the point. Let $P_t(s)$ denote the partial match cost for a
query at $x=s$ in the quadtree built from the points arrived by time $t$.

\begin{prop}\label{propuniformpoisson}There exists $\varepsilon>0$
such that
\[
\sup_{s \in[0,1]} \bigl|t^{-\beta} \mathbf{E} \bigl[P_t(s)
\bigr] -\mu_1(s)\bigr| = O \bigl(t^{-\varepsilon} \bigr).
\]
\end{prop}

The proof of Proposition~\ref{propuniformpoisson} relies crucially on
two main ingredients: first, a~strengthening of the arguments developed by
Curien and Joseph \cite{CuJo2010}, and the speed of convergence $\E{C_n(\xi)}$ to $\Ec
{\mu_1(\xi)}$ for a uniform query line $\xi$; see \eqref{hwangquad}.
By symmetry, we write for any $\delta\in(0,1/2)$,
%
\begin{eqnarray}
\label{equniformdecomp} &&\sup_{s \in[0,1]}\bigl |t^{-\beta}
\mathbf{E} \bigl[P_t(s)\bigr] - \mu_1(s)\bigr| \nonumber\\
&&\qquad= \sup
_{s\in[0,1/2]}\bigl |t^{-\beta} \mathbf{E} \bigl[P_t(s)
\bigr] -\mu_1(s) \bigr|
\\
&&\qquad\le\sup_{s\le\delta} \bigl|t^{-\beta} \mathbf{E}
\bigl[P_t(s)\bigr]- \mu_1(s) \bigr|+\sup_{s\in(\delta,1/2]}
\bigl|t^{-\beta} \mathbf{E} \bigl[P_t(s)\bigr] -
\mu_1(s) \bigr|.\nonumber
\end{eqnarray}
The two terms on the right-hand side above are controlled by the
following lemmas.
%
\begin{lem}[(Behavior on the edge)]\label{lemuniformedge} We have
%
\begin{equation}
\label{equniformedge} \sup_{s\le\delta}\bigl |t^{-\beta} \mathbf{E}
\bigl[P_t(s)\bigr] -\mu_1(s) \bigr| \le2^\beta
\sup_{r \geq t/2} r^{-\beta} \mathbf{E} \bigl[P_r(
\delta) \bigr] + K_1\delta^{\beta/2}.
\end{equation}
\end{lem}

\begin{lem}[(Behavior away from the edge)]\label{lemuniformmiddle}There
exist constants $C_1,C_2,\eta$ with $0 < \eta< \beta$ and $\gamma\in
(0,1)$ such that, for any integer $k$ and real number $\delta\in
(0,1/2)$ we have, for any real number $t>0$,
\[
\sup_{s\in[\delta,1/2]} \bigl|t^{-\beta} \mathbf{E} \bigl[P_t(s)
\bigr]- \mu_1(s)\bigr| \le C_1 \delta ^{-1} (1-
\gamma)^k + C_2 k 2^k (\beta-
\eta)^{-2k} t^{-\eta}.
\]
\end{lem}

Before going further, we indicate how these two lemmas imply
Proposition~\ref{propuniformpoisson}. By Lemmas~\ref
{lemuniformedge} and
\ref{lemuniformmiddle}, we have for any $\delta\in(0,1/2)$ and
natural number $k\ge0$
\begin{eqnarray*}
&&\sup_{s \in[0,1]} \bigl|t^{-\beta} \mathbf{E} \bigl[P_t(s)
\bigr] - \mu_1(s)\bigr| \\
&&\qquad\le 3 K_1 \delta^{\beta/2}+ 3
C_1 \delta^{-1} (1-\gamma)^k + 5
C_2 k t^{-\eta}2^k (\beta-\eta)^{-2k}.
\end{eqnarray*}
Choosing $\delta=t^{-\nu}$ and $k=\lfloor\alpha\log t\rfloor$ for $\nu,\alpha>0$ to be determined, we obtain
\begin{eqnarray*}
\sup_{s \in[0,1]} \bigl|t^{-\beta} \mathbf{E} \bigl[P_t(s)
\bigr] - \mu_1(s)\bigr| &\le & 3 K_1 t^{-\nu\beta/2} + 3
C_1 t^{\nu} (1-\gamma)^{\alpha\log t-1}
\\
& &{}+ 5 C_2 t^{-\eta} \bigl[2/(\beta-\eta)^2
\bigr]^{\alpha\log t} \alpha\log t.
\end{eqnarray*}
First pick $\alpha>0$ small enough that
\[
\alpha\log\biggl(\frac2{(\beta-\eta)^2}\biggr)<\eta.
\]
This $\alpha$ being fixed, choose $\nu>0$ small enough that $\nu+\alpha
\log(1-\gamma)<0$. The claim follows.

Since Curien and Joseph \cite{CuJo2010} prove convergence at any $s\in(0,1)$, it comes
as no surprise that the convergence may be strengthened to uniform
convergence on compacts of $(0,1)$ by checking carefully the (long)
sequence of bounds in \cite{CuJo2010} (Lemma~\ref{lemuniformmiddle}).
We provide the details in the \hyperref[app]{Appendix} for the sake of completeness.
The behavior at the edge, however (Lemma~\ref{lemuniformedge}),
consists precisely of controlling what happens when the bounds in \cite
{CuJo2010} do not work any longer; this is why we provide here the
additional arguments.
To deal with the term involving the values of $s\in[0,\delta]$, we
relate the value $\Ec{P_t(s)}$ to $\Ec{P_t(\delta)}$. The term $\Ec
{P_t(\delta)}$ will then be shown to be small using the pointwise
convergence and choosing $\delta$ small.

The function $\mu_1(s)=\lim_{t\to\infty}\Ec{P_t(s)}$ is monotonic for
$s\in[0,1/2]$. It seems, at least intuitively, that for any fixed real
number $t>0$, $\Ec{P_t(s)}$ should also be monotonic for $s\in[0,1/2]$,
but we were unable to prove it. The following weaker version will be
sufficient for our needs.

\begin{prop}[(Almost monotonicity)]\label{propmonotonicity}
For any $s<1/2$ and $\varepsilon\in[0,\break1-2s)$, we have
\[
\mathbf{E} \bigl[P_t(s)\bigr] \le \mathbf{E} \biggl[P_{t(1+\varepsilon)}
\biggl(\frac{s+\varepsilon
}{1+\varepsilon}\biggr) \biggr].
\]
\end{prop}

The idea underlying Proposition~\ref{propmonotonicity} requires that
we understand what happens to the quadtree upon considering a larger
point set.
For a finite point set $\mathcal P\subset[a,b] \times[0,1] \times
[0,\infty)$, we
let $V(\mathcal P)$ and $H(\mathcal P)$ denote, respectively, the set
of vertical and horizontal line segments of the quadtree built from
$\mathcal P$.

\begin{lem}\label{leminsertion}Let $\mathcal P = \{p_1, \ldots, p_n\}$
be a set of points with $p_i = (x_i, y_i, t_i) \in[a_2,a_3]\times
[0,1] \times[0,\infty)$ ordered by their $t$ coordinate, that is, $t_i
\leq t_{i+1}$. Additionally we assume $\mathcal P$ to be in general
position, meaning that all $x$-coordinates are pairwise different, and
the same holds true for the $y$ and $t$ coordinates. Furthermore let
$\mathcal Q = \{p_1', \ldots, p_m'\} \subseteq[a_1,a_2]\times[0,1]
\times[0,\infty)$ with $p_i' = (x_i', y_i', t_i')$ again ordered
according to their third coordinate such that $\mathcal P \cup\mathcal
Q \subseteq[a_1, a_3] \times[0,1] \times[0, \infty)$ is again in
general position. Then we have
\[
H(\mathcal P\cup\mathcal Q) \supset H(\mathcal P) \quad\mbox {and}\quad V(\mathcal P \cup
\mathcal Q)\subset V(\mathcal P).
\]
\end{lem}
\begin{pf}
We assume for a contradiction that the assertion is wrong and focus on
the case that $H(\mathcal P) \not\subset H(\mathcal P\cup\mathcal Q)$;
the other case is
handled analogously.
Let $i_1$ be the index of the ``first'' point in $\mathcal P$ such that
the horizontal line of $p_{i_1}$ is shorter (at least on the right or left-hand
side of the point)
in the quadtree built from $\mathcal P \cup\mathcal Q$ than it is in
the one built from $\mathcal P$. Here, first refers to the time
coordinate $t$.
Now, by construction there must be an index $i_2$ such that the
vertical line of $p_{i_2}$ blocks the horizontal line of $p_{i_1}$ in
$\mathcal P \cup\mathcal Q$ but not in $\mathcal P$. We again choose
$i_2$ such that $t_{i_2}$ is minimal with this property; by
construction $t_{i_2} < t_{i_1}$. Repeating the argument gives the
existence of an index $i_3$ and a point $p_{i_3}$ whose horizontal line
blocks the vertical line of $p_{i_2}$ in $\mathcal P$ but not in
$\mathcal P \cup\mathcal Q$ with $t_{i_3} < t_ {i_2}$. This obviously
contradicts the choice of $i_1$.
\end{pf}

\begin{pf*}{Proof of Proposition~\ref{propmonotonicity}}Consider the
unit square $[0,1]^2$ and the extended box $[-\varepsilon, 1]\times
[0,1]$, and a single Poisson point process on $[-\varepsilon, 1]\times
[0,1]\times[0,t]$ with unit intensity. Write $P_t^\varepsilon(s)$ for
the number of (horizontal) lines intersecting $\{x=s\}$ in the quadtree
formed by all the points. Similarly, let $P_t(s)=P_t^0(s)$ be the
corresponding quantity when the quadtree is formed using only the
points falling inside $[0,1]^2$. Then, for this coupling, we have by
Lemma~\ref{leminsertion},
\[
P_t(s)\le P_t^\varepsilon(s)\eqdist
P_{t(1+\varepsilon)}\biggl(\frac
{s+\varepsilon}{1+\varepsilon}\biggr).
\]
Taking expectations completes the proof.
\end{pf*}

\begin{pf*}{Proof of Lemma~\ref{lemuniformedge}}
We use Proposition~\ref{propmonotonicity} to relate $\Ec{P_t(s)}$ to
$\Ec{P_{t'}(\delta)}$ for some $t'$. Choosing $\varepsilon=(\delta
-s)/(1-\delta)$ yields $t'=t (1-s)/(1-\delta)\le t(1-\delta)^{-1}$.
Thus, for any $\delta\in(0,1/2)$ and $t>0$ we have
\begin{eqnarray*}
&&\sup_{s\le\delta} \bigl|t^{-\beta} \mathbf{E} \bigl[P_t(s)
\bigr] - \mu_1(s) \bigr|\\
 &&\qquad \le\sup_{s\le\delta}
t^{-\beta} \mathbf{E} \bigl[P_t(s)\bigr] + \mu_1(
\delta)
\\
& &\qquad\le\sup_{s\le\delta} t^{-\beta} \mathbf{E}
\bigl[P_{t'}( \delta)\bigr] +\mu_1(\delta )
\\
&&\qquad \le t^{-\beta} \mathbf{E} \bigl[P_{t/(1-\delta)}(\delta)\bigr] +
\mu_1( \delta)
\\
&&\qquad \le(1-\delta)^{-\beta} \sup_{r\ge t/2 } r^{-\beta}
\mathbf{E} \bigl[P_r(\delta)\bigr] + \mu_1(\delta).
\end{eqnarray*}
This completes the proof since $\delta\leq\frac{1}{2}$ and $\mu
_1(s)\le K_1 \delta^{\beta/2}$.
\end{pf*}

\section{\texorpdfstring{Moments and supremum: Proofs of Theorems~\protect\ref{thmsupremum},
\protect\ref{thmsame-Xi} and Corollary~\protect\ref{thmmixedn}}
{Moments and supremum: Proofs of Theorems 4, 5 and Corollary 6}}\label{secmomentssup}

Our main result implies the convergence of the second moment of the
discrete toward that of the limit process. This section is devoted to
identifying this limit; in particular, it provides an explicit
expression for the limit variance.

We first focus on the moments. The definition of the process $Z(s)$
implies that the second moment $\mu_2(s)=\Ec{Z(s)^2}$ satisfies an
integral equation. We have 
%
\begin{eqnarray*}
\mu_2(s) 
&=& 2 \mathbf{E} \bigl[Y^{2\beta} \bigr]
\biggl\{\int_s^1 x^{2\beta}\cdot
\mu_2 \biggl(\frac s x\biggr)\,dx + \int_0^s
(1-x)^{2\beta}\cdot\mu_2\biggl(\frac{1-s}{1-x}\biggr)\,dx
\biggr\}
\\
&&\hspace*{-2pt}{}+ 2 \mathbf{E} \bigl[ \bigl[Y(1-Y) \bigr]^\beta \bigr] \cdot \biggl\{
\int_s^1 \!\!x^{2\beta}h\biggl(\frac s x
\biggr)^2\,dx + \int_0^s\!
(1-x)^{2\beta}h\biggl(\frac{1-s}{1-x}\biggr)^2\,dx \biggr\}.
\end{eqnarray*}
It now follows that $\mu_2$ satisfies the following integral equation:
\begin{eqnarray*}
\mu_2(s)&=&\frac2{2\beta+1} \biggl\{\int_s^1
x^{2\beta} \mu_2\biggl(\frac s x\biggr)\,dx + \int
_0^s (1-x)^{2\beta} \mu_2
\biggl(\frac{1-s}{1-x}\biggr)\,dx \biggr\}
\\
&&{}+ 2 \mathrm{B} (\beta+1, \beta+1 )\cdot\frac{h^2(s)}{\beta+1}.
\end{eqnarray*}
One easily verifies that the function $f$ given by $f(s)=c_2 h^2(s)$
solves the above equation when $c_2$ is given by
%
\begin{equation}
\label{eqKbar} c_2= 2 \mathrm{B} (\beta+1, \beta+1 )
\frac{2\beta
+1}{3(1-\beta)}.
\end{equation}

In order to show that $\mu_2=c_2h(s)^2$, it now suffices to prove that
the integral equation satisfied by $\mu_2$ admits a unique solution in
a suitable function space.
To this end, we show that the map $K$ defined below is a contraction
for the supremum norm:
%
\begin{eqnarray}
\label{lipop} Kf(s)&=&\frac{2}{2\beta+1} \biggl\{\int_s^1
x^{2\beta} f\biggl(\frac sx\biggr)\,dx + \int_0^s
(1-x)^{2\beta} f\biggl(\frac{1-s}{1-x}\biggr)\,dx \biggr\}
\nonumber
\\[-8pt]
\\[-8pt]
\nonumber
&&{}+2 \mathrm{B} (\beta+1, \beta+1 ) \frac{h(s)^2}{\beta+1}.
\end{eqnarray}
For any two functions $f$ and $g$, measurable and bounded on $[0,1]$,
we have
\begin{eqnarray*}
&&\|K f- Kg\|
\\
&&\qquad = \frac{2}{2\beta+1}\sup_{s\in[0,1]} \biggl\llvert \int
_s^1 x^{2\beta}\biggl(f \biggl(\frac sx
\biggr)-g \biggl(\frac sx\biggr)\biggr)\,dx\\
&&\hspace*{75pt}\qquad{}+\int_0^s
(1-x)^{2\beta} \biggl( f \biggl(\frac{1-s}{1-x}\biggr)-g \biggl(
\frac{1-s}{1-x}\biggr) \biggr)\,dx \biggr\rrvert
\\
&&\qquad \le\frac{2}{2\beta+1} \biggl(\sup_{s\in[0,1]} \biggl\{\int
_s^1 x^{2\beta}\,dx \biggr\}+ \sup
_{s\in[0,1]} \biggl\{\int_0^s
(1-x)^{2\beta
}\,dx \biggr\} \biggr) \|f-g\|
\\
& &\qquad= \frac{4}{(2\beta+1)^2} \|f-g\|.
\end{eqnarray*}
Since $2\beta+1>2$, the operator $K$ is a contraction on the set of
measurable and bounded functions on $[0,1]$ equipped with the supremum norm.
Banach fixed point theorem then ensures that the fixed point is unique,
which shows that indeed $\Ec{Z(s)^2} = c_2 h^2(s).$ Then, $K_2 = c_2
-1$ and one obtains easily the expression for $\Vc{Z(\xi)}$ in \eqref
{eqvarzxi} by integration.

Analogously one shows that
the $m$th moment of $Z(s)$ is of the form $c_m h(s)^m$ where $c_m$
solves \eqref{recmomentsZ}.
The Lipschitz constant of the corresponding operator in~\eqref{lipop}
is $4 / (\beta m + 1)^2$, hence again smaller than one. This
immediately implies
that $(c_m)_{m \geq1}$ are the moments of $Z(s) / h(s),$ independently
of $s$.

Furthermore, there is only one distribution with these moments. We let
$\Psi$ denote the corresponding random variable. To prove this, we show
that there exists a constant $A_1 > 0$ such that
%
\begin{equation}
\label{boundmom} c_m \leq A_1^m
m^m,\qquad m \geq1,
\end{equation}
which completes the proof of the proposition by the Carleman condition;
see, for example, \cite{Feller1971}, page 228.

Suppose that \eqref{boundmom} is satisfied for all $m < m_0$. By
Stirling's formula, there exists a constant $A_2$ such that for all
$m \geq1$ and $1\leq\ell< m,$
\[
\pmatrix{m \cr\ell} \mathrm{B} \bigl(\beta\ell+1, \beta(m-\ell)+1 \bigr) \leq
\frac{A_2}{m} \biggl( \frac{\ell^\ell(m-\ell)^{m-\ell}}{m^m} \biggr)^{\beta-1}.
\]
Next, the prefactor in \eqref{recmomentsZ} is of order $1/m$, and
hence bounded by
$A_3 /m$ for some $A_3> 0$ and all $m > 1$.
Using this, the induction hypothesis and $x^x(1-x)^{1-x} \leq1$ for
all $x \in[0,1]$ it follows that
\begin{eqnarray*}
c_{m_0}& \leq &\frac{A_2 A_3} {m_0^2}\sum_{\ell=1}^{m_0-1}
\bigl( \ell ^\ell(m_0-\ell)^{m_0-\ell}
\bigr)^{\beta-1} m_0^{m_0(1-\beta)} c_\ell
c_{m_0-\ell}
\\
& \leq& \frac{A_1^{m_0} A_2 A_3} {m_0^2} \sum_{\ell=1}^{m_0-1}
m_0^{\beta m_0} m_0^{m_0(1-\beta)}
\\
& \leq& A_1^{m_0} m_0^{m_0},
\end{eqnarray*}
if $m_0$ is chosen large enough. Finally, it is easy to see that any
solution of \eqref{fixedone} with unit mean and finite second moment
has finite moments of all orders. Thus, its moments also satisfy \eqref
{recmomentsZ} and it must coincide with $\Psi$ in distribution.

We now consider the supremum $S_n=\sup_{s\in(0,1)} C_n(s)$. The
uniform convergence of $n^{-\beta}C_n$ directly implies, as $n\to\infty$,
\[
\bar S_n:= \frac{S_n}{K_1 n^{\beta}} \rightarrow S
\]
in distribution with $S = \sup_{t \in[0,1]} Z(t)$
where $Z$ is the process constructed in Section~\ref{seclimit}. The
results obtained so far yield that,
stochastically,
\begin{eqnarray*}
S &\leq& \bigl( (UV)^{\beta} S^{(1)} + \bigl(U(1-V)
\bigr)^{\beta} S^{(2)} \bigr)\\
&&{} \vee \bigl( \bigl((1-U)V
\bigr)^{\beta} S^{(3)} + \bigl((1-U) (1-V) \bigr)^{\beta}
S^{(4)} \bigr),
\end{eqnarray*}
where $S^{(1)}, \ldots, S^{(4)}$ are independent copies of $S$, also
independent of $(U,V)$ which are themselves independent and uniform on $[0,1]$.
To complete the proof of Theorem~\ref{thmsupremum}, it remains to
prove that, for all $m$,
$\E{S^m} < \infty$ and that $\Ec{\bar S_n^m} \to\Ec{S^m}$, as $n \to
\infty$. Theorem 12 and Corollary 21 in \cite{NeSu2011a} provide
uniform integrability of $\bar S_n^2$.
It follows that $\bar S_n$ is bounded in $L^2$ and hence also in $L^1$.
For higher moments, we proceed by induction. Let $B_1$ be such that $\Ec
{\bar S ^m_n} \leq B_1$ for all $m < m_0$ and $n \geq1$ with $m_0 \geq
2$. Furthermore, choose $B_2$ such that $\Ec{\bar S^{m_0}_n} \leq B_2$
for all $n < n_0$.
Then, the recurrence for $C_n(t)$ yields
\begin{eqnarray*}
\mathbf{E} \bigl[\bar S^{m_0}_{n_0} \bigr] &\leq& \mathbf{E}
\biggl[ \biggl( \biggl( \frac{I_1^{(n)}}{n} \biggr)^{\beta} \bar
S_{I_1^{(n)}}^{(1)} + \biggl(\frac{I_2^{(n)}}{n}
\biggr)^{\beta} \bar S_{I_2^{(n)}}^{(1)} \biggr)^{m_0}
\biggr]
\\
&&{}+ \mathbf{E} \biggl[ \biggl( \biggl(\frac{I_3^{(n)}}{n} \biggr)^{\beta}
\bar S_{I_3^{(n)}}^{(3)} + \biggl(\frac{I_4^{(n)}}{n}
\biggr)^{\beta} \bar S_{I_4^{(n)}}^{(4)} \biggr)^{m_0}
\biggr]
\\
&\leq& 4^{m_0} B_1^2 + 4 B_2
\mathbf{E} \biggl[ \biggl(\frac{I_1^{(n)}}{n} \biggr)^{\beta m_0} \biggr].
\end{eqnarray*}
Note that, as $n \to\infty$, we have
$\Ec{(I_1^{(n)}/n)^{\beta m_0}} \rightarrow\E{(UV)^{\beta m_0}} =
(\beta m_0 +1)^{-2},$
thus choosing $n_0$ and $B_2$ appropriately we have $\Ec{ \bar
S^{m_0}_{n_0}} \leq B_2$ since $m_0 \geq2$. This shows that $\bar S_n$
is bounded in $L^{m_0}$, and the assertion follows.

\section{Partial match queries in random $2$-d trees}\label{seckd}
\subsection{$2$-d trees: Constructions and recursions}
The random $2$-d tree was introduced by Bentley \cite{Bentley1975} and is used
to store two-dimensional data just as the two-dimensional quadtree. It
is also called two-dimensional binary search tree since it is binary
and mimics the construction rule of binary search tree for
two-dimensional data. Our aim in this section is to introduce $2$-d
trees, and extend to $2$-d trees the results for partial match queries
in quadtrees we obtained in the previous sections. All the results can
be transferred (convergence as a process, convergence of all moments at
one or multiple points, convergence of the supremum in distribution and
for all moments); we will mainly state the forms of the theorems for
$2$-d trees, and focus on the points that deserve some verifications.

\textsc{Construction of $2$-d trees.} The data are partitioned recursively, as in quadtrees, but the splits
are only binary; since the data is two-dimensional, one alternates
between vertical and horizontal splits, depending on the parity of the
level in the tree. More precisely, consider a point sequence
$p_1,p_2,\ldots, p_n\in[0,1]^2$. As we build the tree, regions are
associated to each node. Initially, the root is associated with the
entire square $[0,1]^2$. The first item $p_1$ is
stored at the root, and splits \emph{vertically} the unit square in two
rectangles, which are associated with the two children of the root.
More generally, when $i$ points have already been inserted, the tree
has $i$ internal nodes, and $i+1$ (lower level) regions associated to
the external nodes and forming a partition of the square $[0,1]^2$.
When point $p_{i+1}$ is stored in the node, say $u$, corresponding to
the region it falls in, divides the region in two sub-rectangles that
are associated to the two children of $u$, which become external nodes;
that last partition step depends on the parity of the depth of $u$ in
the tree: if it is odd we partition horizontally, if it is even we
partition vertically. See Figure~\ref{figkd}. (Of course, one could
start at the root with a horizontal split.)

\begin{figure}

\includegraphics{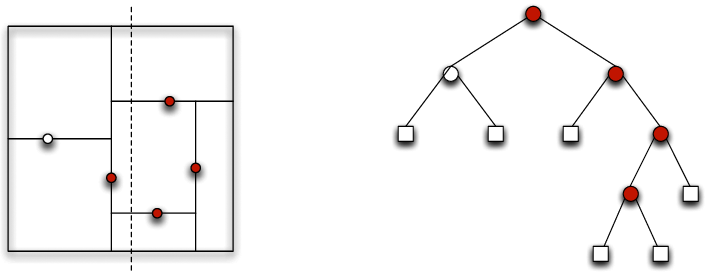}

\caption{An example of a $2$-d tree is shown: on the
left, the partition of $[0,1]^2$ induced by the points; on the right,
the corresponding binary tree. The colored nodes are the ones that are
visited when performing the partial match query materialized by the dashed line.}\label{figkd}
\end{figure}

\textsc{Partial match queries.} From now on, we assume that data consists of a set of independent
random points, uniformly distributed on the unit square. Unlike in the
case of quadtrees, the \emph{direction} of a partial match query line
with respect to the direction of the root does matter. Let $C_n^=(t)$
and $C^\perp_n(t)$ denote the number of nodes visited by a partial
match for a query at position $t\in[0,1]$ when the directions of the
split at the root and the query are parallel and perpendicular,
respectively. Subsequently, we will
analyze both quantities synchronously as far as possible. We will
always consider directions with respect to the query line, and although
some of the expressions (for the sizes of the regions, e.g.)
will be symmetric, we keep them distinct for the sake of clarity. (We
also assume without loss of generality that the query line is always
vertical, and that the direction of the cut at the root may change.)

As in a quadtree, a node is visited by a partial match query if and
only if it is inserted in a subregion that intersects the query line.
Unfortunately, these nodes are not easily
identifiable \emph{after} the insertion of $n$ points; the value of the
quantity $C_n^=(s)$ is obtained by adding twice the number of lines
intersecting the query line
at $s$ and the number of boxes that are intersected by the query line
and will have their next split perpendicular to the query line (i.e.,
the depth of the
corresponding external nodes in the tree have odd parity).

\textsc{Recursive decompositions.} Let $(U,V)$ be the first point which partitions the unit square. By
construction, since the directions of the partitioning lines alternate,
both processes $C_n^=(t)$ and $C_n^\perp(t)$ are coupled: when the
query line is perpendicular to the split direction, the recursive
search occurs in both child sub-regions whose sizes we denote by $N_n$
and $S_n$, and we have
%
\begin{equation}
\label{recHV} C_n^\perp(s) \eqdist 1 +
C^{(=, 1)}_{N_n} (s) + C^{(=, 2)}_{S_n}(s);
\end{equation}
when the query line and the first split at the root are parallel, only
one of the sub-regions (of sizes $L_n$ and $R_n$) is recursively
visited, and we have
%
\begin{equation}
\label{recVH} C_n^\perp(s) \eqdist1+ \I{s<U}
C^{(=, 1)}_{L_n} \biggl(\frac s U\biggr) + \I {s \geq U}
C^{(=, 2)}_{R_n} \biggl(\frac{s-U} {1-U}\biggr).
\end{equation}
Here $(C^{(=, 1)}_n)_{n\geq0}, (C_n^{(=, 2)})_{n\geq0}$ are
independent copies of $(C^=_n)_{n\geq0}$, independent of $(N_n, S_n)$
in \eqref{recHV} and
$(C^{(\perp, 1)}_n)_{n\geq0}$, $(C^{(\perp, 2)})_{n\geq0}$ are
independent copies of $(C^\perp_n)_{n\geq0}$, independent of $(L_n,
R_n)$ in \eqref{recVH}.
Moreover, here and in the following distributional recurrences and
fixed-point equations involving a parameter $s \in[0,1]$ are to be
understood on the level of c{\`a}dl{\`a}g or continuous functions
unless stated otherwise.

As in the case of partial match in random quadtrees, the expected value
at a random uniform query line $\xi$, independent of the tree is of
order $n^{\beta}$ for the same constant $\beta$ defined in \eqref
{defbeta}, and we have
\[
\mathbf{E} \bigl[C_n^=(\xi)\bigr] \sim\kappa_= n^{\beta},\qquad
\mathbf{E} \bigl[C_n^\perp( \xi)\bigr] \sim
\kappa_\perp n^{\beta}
\]
for some constants $\kappa_= > 0, \kappa_\perp> 0$.
This was first proved by Flajolet and Puech~\cite{FlPu1986}. A more detailed analysis by
Chern and Hwang \cite{ChHw2006} shows that
%
\begin{eqnarray}
\qquad\mathbf{E} \bigl[C_n^=(\xi)\bigr] &= &\kappa_= n^{\beta} - 2 +
O \bigl(n^{\beta-1} \bigr), \qquad\kappa_= = \frac{13(3-5 \beta)}{4}\cdot
\frac{\Gamma(2\beta
+2)}{\Gamma(\beta+1)^3}, 
\label{eqk1}
\\
\mathbf{E} \bigl[C_n^\perp(\xi)\bigr] &=&
\kappa_\perp n^{\beta} - 3 + O \bigl(n^{\beta-1} \bigr),\qquad
\kappa_\perp= \frac{13(2 \beta-1)}{2} \cdot\frac{\Gamma
(2\beta+2)}{\Gamma(\beta+1)^3} 
\label{eqk2}.
\end{eqnarray}
Observe that $\kappa_= = \frac12 13(3-5 \beta) \kappa$ and $\kappa
_\perp= 13(2\beta-1) \kappa$, where $\kappa$ is the leading constant
for $\Ec{C_n(\xi)}$ in the case of quadtrees defined in \eqref{defbeta}.

\textsc{Homogeneous recursive relations and limit behavior.} For our purposes, and although it yields more complex expressions, it
is more convenient to expand the recursion one more level to obtain
recursive relations that only involve quantities of the same type, only
$(C_n^=)_{n\ge0}$ or only $(C_n^\perp)_{n\ge0}$: each one of the
first two sub-region at the root is eventually split, and this gives
rise to a partition into four regions at level two of the tree. Let
$(U_\ell,V_\ell)$ and $(U_r,V_r)$ be, respectively, the first points on
each side (left and right) of the first cut, when it is parallel to the
query line. Let also $(U_u,V_u)$ and $(U_d,V_d)$ be the first points on
each side of the cut (up and down) when it is perpendicular to the
query line. Note that $U,V_\ell,V_r$ are independent and uniform on
$[0,1]$, and so are $V,U_u$ and $U_d$.

Let $I_{=,1}^{(n)}, \ldots, I_{=,4}^{(n)}$ and $I_{\perp,1}^{(n)},\ldots, I_{\perp,4}^{(n)}$ denote the number of data points falling in these
regions when the root and the query line are parallel and
perpendicular,
respectively. The distributions of $I_{=,1}^{(n)},\ldots, I_{=,4}^{(n)}$
on the one hand, and $I_{\perp,1}^{(n)},\ldots, I_{\perp,4}^{(n)}$ on
the other hand are slightly more involved than in the case of
quadtrees. One has,
for example, given the values of $U, V_\ell,V_r$ it holds
\[
I_{=,1}^{(n)} \eqdist\operatorname{Bin} \bigl( \bigl(
\operatorname{Bin}(n-1; U)-1 \bigr)_+, V_\ell \bigr)
\]
and given $V, U_d,U_u$
\[
I_{\perp,1}^{(n)} \eqdist\operatorname{Bin} \bigl( \bigl(
\operatorname{Bin}(n-1; V)-1 \bigr)_+,U_d \bigr),
\]
where the inner and outer binomials are independent. Analogous
expressions hold true for the remaining quantities.

Substituting \eqref{recHV} and \eqref{recVH} into each other gives
%
\begin{eqnarray}
\label{recVV} C_n^= (s) &\eqdist&1  + \I{s<U} \biggl[
\I{L_n>0} +C^{(=,1)}_{I_{=,1}^{(n)}}\biggl(\frac s U\biggr) +
C^{(=,
2)}_{I_{=,2}^{(n)}}\biggl(\frac s U\biggr)\biggr]
\nonumber
\\[-8pt]
\\[-8pt]
\nonumber
&&{}+ \I{s \geq U} \biggl[\I{R_n>0}+C^{(=, 3)}_{I_{=,3}^{(n)}}
\biggl(\frac
{s-U}{1-U}\biggr) + C^{(=, 4)}_{I_{=,4}^{(n)}}\biggl(
\frac{s-U}{1-U}\biggr)\biggr]
\end{eqnarray}
and
%
\begin{eqnarray}
\label{recHH} C_n^\perp(s) &\eqdist& 1  +
\I{S_n > 0} + \I{N_n > 0} + \I{s<U_d}
C^{(\perp, 1)}_{I_{\perp,1}^{(n)}}\biggl(\frac{s} {U_d}\biggr)\nonumber\\
&&{} + \I{s <
U_u} C^{(\perp, 2)}_{I_{\perp,2}^{(n)}}\biggl(\frac{s}{ U_u}\biggr)
\\
& &{}+\I{s \geq U_d} C^{(\perp, 3)}_{I_{\perp,3}^{(n)}}\biggl(
\frac{s-U_d} {
1-U_d}\biggr) + \I{s\ge U_u} C^{(\perp, 4)}_{I_{\perp,4}^{(n)}}
\biggl(\frac
{s-U_u}{1- U_u}\biggr),\nonumber
\end{eqnarray}
where $(C^{(=, i)}_n)_{n\geq0}$, $i=1,\ldots, 4$, are independent
copies of $(C^=_n)_{n\geq0}$, which are also independent of the family
$(U, I_{=,1}^{(n)}, I_{=,2}^{(n)}, I_{=,3}^{(n)}, I_{=,4}^{(n)})$ in
\eqref{recVV}, and
$(C^{(\perp, i)}_n)_{n\geq0}$, $i=1,\ldots, 4$, are
independent copies of $(C^\perp_n)_{n\geq0}$, which are also
independent of $(U_d, U_u, I_{\perp,1}^{(n)}, I_{\perp,2}^{(n)},
I_{\perp,3}^{(n)}, I_{\perp,4}^{(n)})$ in \eqref{recHH}.
Asymptotically, any limit $Z^=(s)$ of $n^{-\beta} C_n^=(s)$ should
satisfy the following fixed-point equation:
%
\begin{eqnarray}
\label{fixpV} Z^=(s)&\eqdist &\I{s<U}\biggl[(UV_\ell)^\beta
Z^{(=,1)}\biggl(\frac s U\biggr)+ \bigl(U(1-V_\ell )
\bigr)^\beta Z^{(=,2)}\biggl(\frac s U\biggr)\biggr]
\nonumber
\\
&&{}+\I{s\ge U} \biggl[ \bigl((1-U)V_r \bigr)^\beta
Z^{(=,3)} \biggl(\frac{s-U}{1-U}\biggr)\\
&&\hspace*{48pt}{}+ \bigl((1-U)
(1-V_r) \bigr)^\beta Z^{(=,4)}\biggl(
\frac{s-U}{1-U}\biggr)\biggr],\nonumber
\end{eqnarray}
where $Z^{(=, i)}$, $i=1,\ldots, 4$, are independent copies of $Z^=$,
independent of
$(U, V_\ell, V_r)$. Likewise any limit of $n^{-\beta} C_n^\perp(s)$
should satisfy
%
\begin{eqnarray}
\label{fixpH} Z^\perp(s)&\eqdist &\I{s<U_d}
(U_dV)^\beta Z^{(\perp,1)}\biggl(\frac{s} {U_d}
\biggr) + \I{s<U_u} \bigl(U_u(1-V) \bigr)^\beta
Z^{(\perp,2)}\biggl(\frac{s} {U_u}\biggr)
\nonumber
\\
&&{}+\I{s \geq U_d} \bigl((1-U_d)V
\bigr)^{\beta} Z^{(\perp, 3)} \biggl(\frac{s-U_d} {
1-U_d}\biggr)
\\
&&{}+\I{s \geq U_u} \bigl((1-U_u) (1-V)
\bigr)^{\beta}Z^{(\perp, 4)}\biggl(\frac
{s-U_u}{1- U_u}\biggr),\nonumber
\end{eqnarray}
where $Z^{(\perp, i)}$, $i=1,\ldots, 4$, are independent copies of
$Z^\perp$, independent of
$(U_d, U_u, V).$
Moreover, according to \eqref{recHV} and \eqref{recVH}, we expect a
connection between these two limits. This will be stated in the first
result of the next section and always allows us to focus on $C_n^=(s)$
first. The result for $C_n^\perp$ can then be deduced easily afterwards.

\subsection{About the conditions to use the contraction argument}
\textsc{Existence of continuous limit processes}. As in the case of quadtrees, one of the first steps consists of showing
the existence of the limit processes $Z^\perp$ and~$Z^=$.

\begin{prop} \label{existHV}
There exist two random continuous processes $Z^=, Z^\perp$ with $\Ec
{Z^=(s)} = \Ec{Z^H(s)} = h(s)$, finite absolute moments of all orders
such that $Z^=$ satisfies \eqref{fixpV} and $Z^\perp$ satisfies \eqref
{fixpH}. The laws of $Z^=$ and $Z^\perp$ are both unique under these
constraints.
Additionally:
\begin{itemize}
\item
%
\begin{equation}
\label{fixpHV} \frac{2}{\beta+1} Z^\perp(s) \eqdist
V^{\beta}Z^{(=, 1)} (s) + (1- V)^{\beta}Z^{(=, 2)} (s)
\end{equation}
and
\[
\frac{\beta+1}{2} Z^=(s) \eqdist \I{s<U} U^{\beta}Z^{(\perp, 1)}
\biggl(\frac s U\biggr) + \I{s \geq U} (1-U)^{\beta}Z^{(\perp,
2)}
\biggl( \frac{s-U} {1-U}\biggr).
\]

\item
For every fixed $s\in[0,1]$, $Z^=(s)$ is distributed like $Z(s)$ where
$Z$ is the process constructed in Section~\ref{seclimit}. In
particular, $\V{Z^=(s)}$ is given in \eqref{constvarfix} and $\V
{Z^\perp(s)} = K_2^\perp h^2(s)$, where
%
\begin{equation}
\label{varH} K_2^\perp= \biggl( \frac{2 c_2}{2\beta+1} \biggl(
\frac{\beta
+1}{2} \biggr)^2 + 2 \mathrm{B} (\beta+1, \beta+1 )
\biggl( \frac{\beta
+1}{2} \biggr)^2-1 \biggr),
\end{equation}
and $c_2$ is defined in \eqref{eqKbar}.

\item If $\xi$ is uniform on $[0,1]$ and independent of $Z^=,Z^\perp$,
then $\V{Z^=(\xi)}=\V{Z(\xi)}$ and
%
\begin{eqnarray}
\label{constvarH}&& \operatorname{Var} \bigl(Z^\perp(\xi) \bigr)\nonumber\\
&&\qquad =
K_3^\perp = \biggl( \frac{2 c_2 }{2\beta+1} + 2 \mathrm{B} (
\beta+1, \beta+1 ) \biggr) \biggl( \frac{\beta
+1}{2} \biggr)^2
\mathrm{B}(\beta+1,\beta+1)
\\
&&\hspace*{30pt}\qquad\quad{} - \biggl(\mathrm{B} \biggl( \frac{\beta}{2} +1, \frac{\beta}{2} +1
\biggr) \biggr)^2.\nonumber
\end{eqnarray}
\end{itemize}
\end{prop}
\begin{pf}
The fixed-point equation \eqref{fixpV} is very similar to that in
\eqref{eqlimitprocess}, and we use the approach that has proved
fruitful in Section~\ref{seclimit}. More precisely, the construction
of $Z(s)$ slightly modified to $Z^=(s)$. Define the operator
$G^=:[0,1]^3 \times\mathcal C[0,1]^4 \to\mathcal C[0,1]$ by
\begin{eqnarray*}
&&G^=(x,y,z, f_1,f_2,f_3,f_4) (s)
\\
&&\qquad=\I{s<x} \biggl[(xy)^\beta f_1\biggl(\frac sx\biggr)+
\bigl(x(1-y) \bigr)^\beta f_2\biggl(\frac sx\biggr) \biggr]
\\
&&\qquad\quad{} +\I{s\ge x} \biggl[ \bigl((1-x)z \bigr)^\beta f_3
\biggl( \frac{s-x}{1-x} \biggr)+ \bigl((1-x) (1-z) \bigr)^\beta
f_4 \biggl(\frac{s-x}{1-x} \biggr) \biggr].
\end{eqnarray*}
Then let (as in Section~\ref{seclimit})
\[
Z_{n+1}^{=,u}=G^= \bigl(U_u, V_u,
W_u, Z_n^{=, u1}, Z_n^{=, u2},
Z_n^{=, u3}, Z_n^{=,u4} \bigr),\qquad
Z_0^{=,u} = h(s)
\]
for all $u \in\mathcal T$, where $\{U_v, v\in\mathcal T\}, \{V_v,
v\in\mathcal T\}$
and $\{W_v, v\in\mathcal T\}$ are three independent families of
i.i.d. $[0,1]$-uniform random variables.
Lemma \ref{lemboundfixeds} remains true for $Z_n^=:= Z_n^{=,
\varnothing}$ since $W_n^=$ equals $W_n$ in distribution where $W_n$
appears in \eqref{tube}.
Since also $L_n^=$ and $L_n$ (appearing in Lemma~\ref
{lemclosestpair}) coincide in distribution, \eqref
{eqXborel-cantelli} holds true for
$Z_n^=$ and therefore Proposition~\ref{propcontinuouspaths} remains valid.
The existence of all moments of $\sup_{s \in[0,1]} Z^=(s)$ follows in
the same way. Finally, note that
$Z^=_n(s)$ is distributed as $Z_n(s)$ for all fixed $n,s$, hence the
one-dimensional distributions of $Z^=$ and $Z$ coincide.
It is now easy to see that $Z^\perp$ defined by \eqref{fixpHV} solves
\eqref{fixpH}.
The uniqueness of $Z^=(s)$ [resp., $Z^=(s)$] follows by contraction with
respect to the $\zeta_2$ metric; compare Lemma 18 in \cite{NeSu2011a}.
Finally, the variance of $Z^\perp(s)$ can be computed as in Section~\ref
{secmomentssup} but it
is much easier to use \eqref{fixpHV}, we omit the calculations.
\end{pf}

\textsc{Uniform convergence of the mean.}
Comparing construction and recurrence for partial match queries in
$2$-d trees and quadtrees it seems very likely that this quantities are
not only
of the same asymptotic order in the case of a uniform query but also
closely related for fixed $s \in[0,1]$ and $n \in\N$. This can be
formalized by the following lemma:
%
\begin{lem} \label{lemcompareE}
For any $s \in[0,1]$ and $n \in\N$, we have
\[
\tfrac{1}{5} \mathbf{E} \bigl[C_n(s)\bigr] \leq \mathbf{E}
\bigl[C_n^{=}(s)\bigr] \leq2 \mathbf{E}
\bigl[C_n(s)\bigr].
\]
\end{lem}
\begin{pf}
We prove both bounds by induction on $n$ using the recursive
decompositions \eqref{eqCnrec}, \eqref{recVV}. Both inequalities are
obviously true for
$n = 0,1$.
Assume that the assertions were true for all $m \leq n-1$ and $s \in
[0,1]$. We start with the upper bound which is easier. By \eqref
{recVV}, we have
\begin{eqnarray*}
\mathbf{E} \bigl[C_n^= (s) \bigr] &\leq&2  + \mathbf{E} \biggl[
\I{s<U} \biggl[C^{(=,1)}_{I_{=,1}^{(n)}} \biggl(\frac s U\biggr) +
C^{(=, 2)}_{I_{=,2}^{(n)}}\biggl(\frac s U\biggr)\biggr] \biggr]\\
&&{}+ \mathbf{E} \biggl[\I{s \geq U} \biggl[C^{(=, 3)}_{I_{=,3}^{(n)}}
\biggl( \frac
{s-U}{1-U}\biggr) + C^{(=, 4)}_{I_{=,4}^{(n)}}\biggl(
\frac{s-U}{1-U}\biggr)\biggr] \biggr].
\end{eqnarray*}
Hence, it suffices to show that
\[
\mathbf{E} \biggl[\I{s < U} C^{(=,1)}_{I_{=,1}^{(n)}}\biggl(\frac s U
\biggr) \biggr] \leq2 \mathbf{E} \biggl[\I {s < U} C^{(1)}_{I_1^{(n)}}
\biggl( \frac s U \biggr) \biggr].
\]
This can be done in two steps. First, by conditioning on
$I_{=,1}^{(n)}$ and $U$, using the induction hypothesis, we have
\[
\mathbf{E} \biggl[\I{s < U} C^{(=,1)}_{I^{(n)}_{=,1}} \biggl( \frac s U
\biggr) \biggr] \leq2 \mathbf{E} \biggl[\I{s < U} C^{(1)}_{I^{(n)}_{=,1}}
\biggl( \frac s U \biggr) \biggr].
\]
Finally, conditioning on $U$, $I^{(n)}_{=,1}$
is stochastically smaller than $I^{(n)}_1$ which gives
\[
\mathbf{E} \biggl[\I{s < U} C^{(1)}_{I^{(n)}_{=,1}} \biggl( \frac s U
\biggr) \biggr] \leq2 \mathbf{E} \biggl[\I{s < U} C^{(1)}_{I_1^{(n)}}
\biggl( \frac s U \biggr) \biggr]
\]
by monotonicity of $n \to\E{C_n(s)}$. For the lower bound, note that
\begin{eqnarray*}
\mathbf{E} \bigl[C_n^= (s) \bigr]& \geq&1  + \mathbf{E} \biggl[
\I{s<U} \biggl[C^{(=,1)}_{I_{=,1}^{(n)}} \biggl(\frac s U\biggr) +
C^{(=, 2)}_{I_{=,2}^{(n)}}\biggl(\frac s U\biggr)\biggr] \biggr]
\\
&&{}+ \mathbf{E} \biggl[\I{s \geq U}\biggl[C^{(=, 3)}_{I_{=,3}^{(n)}}\biggl(
\frac
{s-U}{1-U}\biggr) + C^{(=, 4)}_{I_{=,4}^{(n)}}\biggl(
\frac{s-U}{1-U}\biggr)\biggr] \biggr].
\end{eqnarray*}
Therefore, it is enough to prove
\[
\mathbf{E} \biggl[\I{s < U} C^{(=,1)}_{I_{=,1}^{(n)}}\biggl(\frac s U
\biggr) \biggr] \geq \frac
{1}{5} \biggl( \mathbf{E} \biggl[\I{s < U}
C^{(1)}_{I_1^{(n)}} \biggl( \frac s U \biggr) \biggr] - 1 \biggr).
\]
This can be done as for the upper bound. First, by the induction
hypothesis, we have
\[
\mathbf{E} \biggl[\I{s < U} C^{(=,1)}_{I^{(n)}_{=,1}} \biggl( \frac s U
\biggr) \biggr] \geq \frac{1}{5} \mathbf{E} \biggl[\I{s < U}
C^{(1)}_{I^{(n)}_{=,1}} \biggl( \frac s U \biggr) \biggr].
\]
The result follows as for the upper bound by the fact that
$I_{=,1}^{(n)}$ is stochastically larger than $(I_1^{(n)}-1)^+$ and
$C^{(1)}_{(I_1^{(n)}-1)^+} \geq C^{(1)}_{I_1^{(n)}} -1$.
\end{pf}
Recalling \eqref{eqk1} and \eqref{eqk2}, it is natural to introduce the
constants
\begin{eqnarray}
\label{constK1HV} K_1^= = \frac{\kappa_=}{\mathrm{B} ({\beta}/{2} +1, {\beta
}/{2} +1  )},\qquad K_1^\perp=
\frac{\kappa_\perp}{\mathrm{B} (
{\beta}/{2} +1, {\beta}/{2} +1  )}
\nonumber
\\[-8pt]
\\[-8pt]
\eqntext{\mbox{with }\displaystyle K_1^\perp=
\frac{2}{1+\beta} K_1^=,}
\end{eqnarray}
and the functions $\mu_1^\perp(s) = K_1^\perp h(s)$, and $\mu_1^=(s) =
K_1^= h(s).$
%
\begin{prop}
There exists $\varepsilon_= > 0$ such that
\[
\sup_{s \in[0,1]} \bigl|n^{-\beta} \mathbf{E} \bigl[C_n^=(s)
\bigr] - \mu_1^=(s)\bigr| = O \bigl(n^{-\varepsilon_=} \bigr),
\]
and the analogous result holds true for $\Ec{C_n^\perp(s)}$.
\end{prop}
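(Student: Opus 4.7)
The plan is to mimic the two-step proof of Proposition~\ref{prop:uniform_conv} for the quadtree case, splitting the supremum at a cut-off $\delta \in (0, 1/2)$ into edge behaviour ($s \in [0, \delta]$) and interior behaviour ($s \in [\delta, 1/2]$, by symmetry). As in Section~\ref{sec:unif_convergence}, I would first pass to the Poissonized cost $P_t^=(s)$, prove the uniform rate there, and de-Poissonize at the end using monotonicity of $n \mapsto \Ec{C_n^=(s)}$ together with standard Poisson concentration. For the edge contribution the quadtree result already does the work: Lemma~\ref{lem:compareE} gives $\Ec{C_n^=(s)} \leq 2 \Ec{C_n(s)}$, hence Proposition~\ref{prop:uniform_conv} yields $n^{-\beta} \Ec{C_n^=(s)} \leq 2 K_1 h(s) + O(n^{-\varepsilon})$ uniformly, and since both $h(s)$ and $\mu_1^=(s) = K_1^= h(s)$ are bounded by $\delta^{\beta/2}$ times a constant for $s \leq \delta$, we obtain
\begin{equation*}
\sup_{s \in [0, \delta]} \big| n^{-\beta} \Ec{C_n^=(s)} - \mu_1^=(s) \big| \leq C_1 \delta^{\beta/2} + C_2 n^{-\varepsilon}.
\end{equation*}
This bypasses the need for an analogue of the monotonicity argument of Proposition~\ref{prop:monotonicity}, which would be delicate to set up for $2$-d trees because of the alternating directions of the splits.

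For the interior $s \in [\delta, 1/2]$, the Poissonized form of the recurrence \eqref{rec:VV} produces an integral equation for $m_t^=(s) := \Ec{P_t^=(s)}$ with the same structural shape as the one treated in Section~\ref{sec:unif_convergence}: the scalings are again powers of $x$ and $1-x$, and the associated integral operator (compare \eqref{lipop}) is a strict contraction on bounded functions, with Lipschitz constant $4/(2\beta+1)^2 < 1$. Iterating this contraction $k$ times on $[\delta, 1-\delta]$, and feeding in at each step the pointwise rate of convergence at uniform queries provided by \eqref{eqk1}, yields a bound of exactly the same form as Lemma~\ref{lem:uniform_middle}: there exist $0 < \eta < \beta$, $\gamma \in (0, 1)$, and $C_3, C_4 > 0$ with
\begin{equation*}
\sup_{s \in [\delta, 1/2]} \big| t^{-\beta} \Ec{P_t^=(s)} - \mu_1^=(s) \big| \leq C_3 \delta^{-1} (1-\gamma)^k + C_4 \, k \, 2^k (\beta - \eta)^{-2k} t^{-\eta},
\end{equation*}
for every integer $k \geq 1$. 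Balancing $\delta = t^{-\nu}$ and $k = \lfloor \alpha \log t \rfloor$ for sufficiently small $\alpha, \nu > 0$, exactly as in the paragraph following Lemma~\ref{lem:uniform_middle}, produces the desired polynomial rate, which combined with de-Poissonization gives the claim for $C_n^=$.

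The result for $C_n^\perp$ then follows by taking expectations in the direct one-level recurrence \eqref{rec:HV}: $\Ec{C_n^\perp(s)} = 1 + \Ec{\Ec{C_{N_n}^=(s) \mid N_n}} + \Ec{\Ec{C_{S_n}^=(s) \mid S_n}}$, where $N_n + S_n = n - 1$ and $N_n \sim \text{Bin}(n-1, V)$ for $V$ uniform on $[0, 1]$. Plugging in the uniform rate for $\Ec{C_n^=(s)}$ just established and using the binomial moment estimate \eqref{eq:alpha_moment_bin}, one obtains
\begin{equation*}
n^{-\beta} \Ec{C_n^\perp(s)} = K_1^= h(s) \, \Ec{V^\beta + (1-V)^\beta} + O(n^{-\varepsilon_=}) = K_1^\perp h(s) + O(n^{-\varepsilon_=})
\end{equation*}
uniformly in $s$, using $\Ec{V^\beta + (1-V)^\beta} = 2/(\beta + 1)$ together with $K_1^\perp = \frac{2}{1+\beta} K_1^=$ from \eqref{const:K1HV}. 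The main obstacle is the interior step, namely the careful tracking of the $\delta$-dependence through the iterations of the integral operator; since the $2$-d tree sub-size variables $I_{=, i}^{(n)}$ agree with the quadtree ones in their limiting marginals $UV_\ell, U(1-V_\ell), \dots$, and differ only in their joint structure, the entire scheme of Section~\ref{sec:unif_convergence} transfers with only cosmetic adjustments to constants.
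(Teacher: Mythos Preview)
Your overall plan is correct and matches the paper: Poissonize, split at a cut-off $\delta$, treat edge and interior separately, de-Poissonize, then derive the $\perp$ case from the $=$ case via \eqref{rec:HV}. Your edge argument is in fact slightly cleaner than the paper's: by invoking Lemma~\ref{lem:compareE} together with the already-proved Proposition~\ref{prop:uniform_conv} you bypass any need to reprove an almost-monotonicity statement (Proposition~\ref{prop:monotonicity}) for $2$-d trees, which the paper's sketch implicitly repeats. Your treatment of the $\perp$ case via \eqref{rec:HV} and \eqref{eq:alpha_moment_bin} is exactly what the paper does.

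The interior step, however, is mischaracterized. The proof of Lemma~\ref{lem:uniform_middle} does \emph{not} proceed by iterating a contracting integral operator in the style of \eqref{lipop}; that operator concerns the second moment of the \emph{limit} process, not $t^{-\beta}\Ec{P_t(\cdot)}$. What actually happens is a $k$-fold geometric unfolding of the Poissonized recursion along one branch (producing the factor $2^k$ and the variables $M_k,\tau_k,\xi_k$), followed by the Curien--Joseph Markov-chain coupling: a random time $T$ with exponential tails such that on $\{T\le k\}$ the relative position $\xi_k$ is uniform and independent of the area ratios, at which point the uniform-query rate \eqref{eqk1} can be inserted. Adapting this to $2$-d trees is not cosmetic. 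Two genuine modifications are needed: first, the inter-arrival time ${\zeta'}_k^{=}$ must be split into the waiting time for the first parallel point and the subsequent waiting time for the perpendicular cut; second, and more seriously, the quantity $F_k^{=}=M_k^{=}\tau_k^{=}$ is \emph{not} conditionally independent of $(\xi_\ell)_{\ell\le k}$ given $(\mathcal M_\ell)_{\ell\le k}$, whereas this independence is used in the quadtree proof. The paper circumvents this by sandwiching $F_k^{=}$ between $0$ and an upper bound, and by repeatedly converting between the quadtree and $2$-d tree means via the two-sided comparison \eqref{compareEt} (the Poissonized form of Lemma~\ref{lem:compareE}) together with monotonicity of $t\mapsto P_t(s)$. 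Your sketch does not touch this, and the phrase ``cosmetic adjustments to constants'' understates what is required.
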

We proceed as in Section~\ref{secunifconvergence} by considering the
continuous-time process $P_t^=(s)$. Since we have already proved an
analogous result for the case of quadtree, we give a brief sketch that
focuses on the few locations where the arguments have to be modified.

\begin{pf*}{Sketch of proof}
The first step is to prove point-wise convergence
which is done as Curien and Joseph \cite{CuJo2010}. By Lemma~\ref{lemcompareE}, using
a Poisson$(t)$ number of points, we have
%
\begin{equation}
\label{compareEt} \tfrac1 5 \mathbf{E} \bigl[P_t(s)\bigr] \leq
\mathbf{E} \bigl[P^=_t(s)\bigr] \leq2 \mathbf{E}
\bigl[P_t(s)\bigr].
\end{equation}
Let $\tau_1^=$ be the arrival time of the first point which yields a
partitioning line that intersects the query line $\{x = s \}$, and let
$Q_1^= = Q_1^=(s)$ be the lower of the two rectangles created by this
cut (for the expected value we are about to compute, they both look the
same). Let $\xi_1^=:= \xi_1^=(s)$ be the relative position of the
query line $s$ within the rectangle $Q_1^=$ and $M_1^= = \operatorname{Leb}(Q_1^=)$.
Then, denoting $\tau$ the arrival time of the first point in the
process, we have
\[
\mathbf{E} \bigl[P_t^=(s)\bigr] = \mathbf{P} (t \geq\tau ) +
\mathbf{P} \bigl(t \geq \tau_1^= \bigr) + 2 \mathbf{E} \bigl[\tilde
P^=_{M_1^= t-\tau_1^=} \bigl(\xi_1^= \bigr)\bigr],
\]
where $(\tilde P^=(t))_{t \geq0}$ denotes an independent copy of
$(P^=(t))_{t \geq0}$ and $\tilde P^=(t) = 0$ for $t < 0$.
Similarly, let $\tau_k^=$ be the arrival time of the first point which
cuts $Q_{k-1}^=$ perpendicularly to the query line. Let
$Q^=_k$ be the lower of the two rectangles created by this cut, and let
$\xi_k^=$ be the position of the query line $s$ relative to the rectangle
$Q_k^=$. With this notation and $M_k^= = \operatorname{Leb}(Q_k^=)$, we have
\[
\mathbf{E} \bigl[P_t^=(s)\bigr] = g_k^=(t)+2^k
\mathbf{E} \bigl[\tilde P^=_{M_k^= t-\tau_k^=} \bigl(\xi_k^= \bigr)
\bigr],
\]
where $0 \leq g_k^=(t) \leq2^{k+1}$.\eject

We need to modify the inter-arrival times ${\zeta'}_k^{=} = \tau_k^= -
\tau_{k-1}^=$. We can split ${\zeta'}_k^{=}$ in the time it takes for the
first vertical point to fall in $Q_{k-1}^=$ which we denote by ${\zeta
'}_k^{=,1}$ and the remaining time by
${\zeta'}_k^{=,2}$. Letting $M_k^= = \vol(Q_k^=)$, the normalized
versions of the inter-arrival times with unit mean are
\begin{eqnarray*}
\zeta_k^{=,1} & =& {\zeta'}_k^{=,1}
\cdot M_{k-1}^=,
\\
\zeta_k^{=,2} & =& \biggl( \frac{\xi^=_k}{\xi^=_{k-1}} \I{
\xi^=_k < \xi ^=_{k-1}} + \frac{\xi^=_{k-1}}{\xi^=_k} \I{
\xi^=_k \geq\xi^=_{k-1}} \biggr) {\zeta'}_k^{=,2}
\cdot M_{k-1}^= \geq{\zeta'}_k^{=,2}
\cdot M_{k-1}^=.
\end{eqnarray*}
Write $\mathcal M_k=M_k/M_{k-1}$. Observe that, given $\mathcal M_0^=,
\ldots, \mathcal M_k^=$, the random variable $F_k^= = M_k^= \cdot\tau
_k^=$ is not independent of $(\xi_{\ell})_{0 \leq\ell\leq k}$, a
property which is used in \cite{CuJo2010} and in the proof of Lemma~\ref
{lemuniformmiddle} in the present paper. However we can
use the trivial lower bound $0 \leq F_k$ and the upper bound obtained
by bounding ${\zeta'}_k^{=,2}$ from above by $\zeta_k^{=,2} / M_{k-1}^=$.
Then, using almost sure monotonicity of $P_t(s)$ (in $t$) and \eqref
{compareEt} to transform bounds for the mean in the quadtree to bounds
in the
$2$-d tree (and vice versa), it is easy to see that the techniques of
Section 4 in \cite{CuJo2010} work equally well in this case. The limit
$\mu_1^=(s)$ is identified as in Section 5 of \cite{CuJo2010}
since both limits satisfy the same fixed-point equation.

The generalization to uniform convergence with polynomial rate can be
worked out as in
Section~\ref{secunifconvergence} (of the present document) using the
modifications we have described above. The constants appearing in the
course of Section~\ref{secunifconvergence} need to be modified, but
$\varepsilon_=$ may be chosen to equal the value of $\varepsilon$ in
Proposition~\ref{propuniformpoisson}.
The de-Poissonization is routine, and we omit the details.

Finally, we indicate how to proceed with $\Ec{C_n^\perp(s)}$. The
arguments above can be used to treat prove uniform convergence of
$n^{-\beta} \Ec{C_n^\perp(s)}$ on $[0,1]$; we present a direct approach
relying on \eqref{recHV}. We have
\begin{eqnarray*}
n^{-\beta} \mathbf{E} \bigl[C_n^\perp(s)\bigr] &=&
n^{-\beta} + 2 n^{-\beta} \mathbf{E} \bigl[C^=_{S_n} (s)
\bigr]
\\
& =& n^{-\beta} + 2 \int_0^1 \sum
_{k=0}^{n-1} \bigl(\mu_1^=(s) +
O \bigl(k^{-\varepsilon_=} \bigr) \bigr) \frac{k^\beta}{n^\beta} \mathbf{P} \bigl(
\operatorname {Bin}(n-1,v)=k \bigr) \,dv
\\
& =& n^{-\beta} + 2 \mu_1^=(s) \cdot\frac{
\E{\operatorname{Bin}(n-1,V)^\beta
}}{n^\beta} \\
&&{}+ O
\bigl(n^{-\beta} \mathbf{E} \bigl[\operatorname{Bin}(n-1,V)^{\beta- \varepsilon
_=}
\bigr] \bigr)
\\
& =& \mu_1^\perp(s) + O \bigl(n^{-\varepsilon_=} \bigr),
\end{eqnarray*}
uniformly in $s \in[0,1]$ using Minkowski's inequality, the
concentration for binomial in \eqref{eqalphamomentbin}, and \eqref
{constK1HV} for the first term and Jensen's inequality for the second.
\end{pf*}

\subsection{The limiting behavior in $2$-d trees}
We are finally ready to state the version of our main result for $2$-d
trees. It is proved along the same lines we used for the case of
quadtrees, and we omit the details.

\begin{thmm} \label{thmmainVH}
With the processes $Z^=$ and $Z^\perp$ of Proposition \ref{existHV} we have
\[
\biggl( \frac{C_n^=(s)}{K_1^= n^{\beta}} \biggr)_{s \in[0,1]} \rightarrow \bigl(Z^=(s)
\bigr)_{s \in[0,1]},\qquad\biggl( \frac
{C_n^\perp(s)}{K_1^\perp n^{\beta}} \biggr)_{s \in[0,1]}
\rightarrow \bigl( Z^\perp(s) \bigr)_{s \in[0,1]},
\]
in distribution in $\Do$ endowed with the Skorokhod topology. Here
$K_1^=$ and $K_1^\perp$ are defined in \eqref{constK1HV}.
For $s\in[0,1]$
\[
n^{-\beta} \mathbf{E} \bigl[C_n^=(s)\bigr] \rightarrow
K_1^= h(s),\qquad n^{-2 \beta} \operatorname{Var}
\bigl(C^=_n(s) \bigr) \rightarrow \bigl(K_1^=
\bigr)^2 K_2 h(s)^2
\]
and
\[
n^{-\beta} \mathbf{E} \bigl[C_n^\perp(s)\bigr]
\rightarrow K_1^\perp h(s),\qquad  n^{-2
\beta}
\operatorname{Var} \bigl(C^\perp_n(s) \bigr) \rightarrow
\bigl(K_1^\perp \bigr)^2 K_2^\perp
h(s)^2,
\]
where $K_2$ is given in \eqref{constvarfix} and $K_2^\perp$ in \eqref{varH}.

If $\xi$ is uniformly distributed on $[0,1]$, independent of
$(C_n^=)_{n\geq0}, (C_n^\perp)_{n \geq0}$ and $Z^=, Z^\perp$,
then
\[
\frac{C^=_n(\xi)}{K_1^= n^{\beta}} \stackrel{d} {\longrightarrow} Z^=(\xi ),\qquad \frac{C^\perp_n(\xi)}{K_1^\perp n^{\beta}}
\stackrel {d} {\longrightarrow} Z^\perp(\xi),
\]
with convergence of the first two moments in both cases.
In particular
\[
\operatorname{Var} \bigl(C^=_n(\xi) \bigr) \sim K_4^=
n^{2 \beta}, \qquad\operatorname{Var} \bigl(C^\perp_n(\xi)
\bigr) \sim K_4^\perp n^{2 \beta},
\]
where $K_4^= = (K_1^=)^2 K_3 \approx0.69848$, $K_4^\perp= (K_1^\perp
)^2 K^V_3 \approx0.77754,$ with
$K_3=\V{Z(\xi)}$ in \eqref{eqvarzxi} and $K_3^\perp$ in \eqref{constvarH}.
\end{thmm}

Note that since $Z^=(s)$ equals $Z(s)$ in distribution for fixed $s \in
[0,1]$ we can characterize $Z^=(s)$ as in \eqref{1dimrv}.
\eqref{fixpHV} together with Proposition \ref{existHV} implies that
for fixed $s \in[0,1]$
\[
Z^\perp(s) \eqdist\Psi^\perp\cdot h(s) \qquad\mbox{with } \Psi
^\perp= \frac{\beta+1}{2} \bigl(V^\beta\Psi+
(1-V)^\beta\Psi' \bigr),
\]
where $\Psi'$ is an independent copy of $\Psi$, $\Psi$ being defined in
Theorem~\ref{thmsame-Xi} and $V$ is independent of $(\Psi,\Psi')$. In
particular, we have
\[
\mathbf{E} \bigl[ \bigl(\Psi^\perp \bigr)^m \bigr] =
\biggl( \frac{\beta+1}{2} \biggr)^m \sum
_{\ell=0}^m \pmatrix{m \cr\ell} \mathrm{B} \bigl(\beta
\ell+1, \beta(m-\ell) +1 \bigr) c_\ell c_{m-\ell}
\]
for $m \geq2$ where $c_m = \Ec{\Psi^m}$ satisfies recursion \eqref
{recmomentsZ} and $c_0 = c_1 = 1$. \par
Also, as in the quadtree case, it is possible to give convergence of
mixed moments of arbitrary order, compare Corollary \ref{thmmixedn}, and
distributional and moment convergence of the suprema of the processes
after rescaling as in Theorem \ref{thmsupremum}.

\begin{appendix}\label{app}
\section{About the geometry of random quadtrees}

\begin{lem} \label{lemapp1}
Let $W_n$ denote the maximum width of a cell at level $n$ {in the
construction of $Z_n$ and $c < 1$}. Then
\[
\mathbf{P}\bigl(W_n \ge c^n\bigr)
\le \bigl(4e\log(1/c)
\bigr)^n.
\]
\end{lem}
\begin{pf}
Let $U_i$, $i\ge1$ be a family of i.i.d. $[0,1]$-uniform random
variables and $E_i$, $i\ge1$, be a family of i.i.d. $\operatorname{exponential}(1)$
random variables. Then, the union bound and a large deviations argument yields
\begin{eqnarray*}
\mathbf{P}\bigl(W_n \ge c^n\bigr)
 &\le&4^n \cdot
\mathbf{P}\bigl(U_1 \cdot U_2 \cdots U_n \ge
c^n\bigr)
\\
&=& 4^n \cdot
\mathbf{P}\Biggl(\sum_{i=1}^n
E_i \le n \log(1/c)\Biggr)
\\
&\le&4^n \exp \bigl(-n \bigl(\log(1/c)-1-\log\log(1/c) \bigr) \bigr)
\\
&\le& \bigl(4e\log(1/c) \bigr)^n
\end{eqnarray*}
as desired.
\end{pf}

\begin{lem}\label{lemfill-up}Let $F_k$ be the fill-up level of a
random quadtree of size $k$. Then, for every integer number {$x> 22$}
there exists an integer $n_0(x)$ with
\[
\mathbf{P}(F_{x^n}< n )
\le4^{n+1} x^{-n/100},\qquad n \geq
n_0(x).
\]
\end{lem}
\begin{pf}We consider the $4^n$ possible nodes in level $n$. By
symmetry each of them is occupied by a key with the same probability.
Looking at a specific one, for example, the leftmost, size of the
corresponding subtree is stochastically bounded by $\operatorname{Bin}(x^n;
U_1V_1\cdots U_nV_n)-n$ where $\{U_i, i\ge1\}$ and $\{V_i, i\ge1\}$
are independent families of i.i.d. $[0,1]$-uniform random variables.
Then by the union bound applied to the $4^n$ cells at level $n$, using
Chernoff's inequality,
we have
%
\begin{eqnarray}
\label{eqfill-up}
\mathbf{P}(F_{x^n}<n)
 &\le&4^n \cdot
\mathbf{P}\bigl(\operatorname{Bin}\bigl(x^n; U_1V_1\cdots
U_nV_n\bigr)\leq n\bigr)
\nonumber
\\
&\le&4^n \cdot\exp\bigl(- \bigl(1-n2^{-n}
\bigr)^2 2^{n+1}\bigr) \\
&&{}+ 4^n
\mathbf{P}\biggl(U_1V_1 \cdots U_nV_n \le
\biggl( \frac{2}{x} \biggr)^n\biggr).\nonumber
\end{eqnarray}
However, using once again the large deviations principle for sums of
i.i.d. exponential random variables $E_i, i\ge1$,
%
\begin{eqnarray}
\label{eqproductunif}
\mathbf{P}\bigl(U_1V_1\ldots
U_nV_n\le(2/x)^n\bigr)
 & = &
\mathbf{P}\Biggl(\sum
_{i=1}^{2n} E_i \ge n \log(x/2)\Biggr)
\nonumber
\\
& \le&\exp\biggl(-2n \biggl( \frac{\log(x/2)}{2}-1-\log\frac{\log
(x/2)}{2} \biggr)
\biggr)
\\
& \le& x^{-n/100}\nonumber
\end{eqnarray}
for all $x > 22$ since then $\frac{e^2}{2} \log^2(x/2) \leq x^{99/100}$.
Putting (\ref{eqfill-up}) and (\ref{eqproductunif}), we obtain
\[
\mathbf{P}(F_{x^n}<n)  \le 4^n \exp \bigl(-2^{n-1}
\bigr) + 4^n \cdot x^{-n/100} \le4^{n+1}
x^{-n/100}
\]
for $x > 22$ and $n$ large enough.
\end{pf}

\begin{lem}\label{lemclosestpair} {There exists $0 < \gamma_0 < 1$
such that any positive real number $\gamma< \gamma_0$}, there exists
an integer $n_1(\gamma)$ with
\[
\mathbf{P}\bigl(L_n< \gamma^n\bigr)
 \le 6^{n+1}
\gamma^{n/201}, \qquad n \geq n_1(\gamma).
\]
\end{lem}
\begin{pf}The joint distribution of the $x$-coordinates of the
vertical lines in the tree developed up to level $n$ is complex. In
particular, it is \emph{not} that of independent uniform points on
$[0,1]$. However, we can use a simple coupling with a family of i.i.d.
random points on $[0,1]^2$ that yields a good enough lower bound on $L_n$.

Let $\xi_i=(U_i,V_i)$, $i\ge1$ be i.i.d. uniform random points on
$[0,1]^2$. Let $T_k$ be the quadtree obtained by inserting the random
points $\xi_i$, $1\le i\le k$, in this order. Write $D_i$ for the depth
at which the point $\xi_i$ is inserted; so, for instance, $D_1=0$. Let
$K_n$ be the first $k$ for which the tree $T_k$ is complete up to level
$n$; we mean here that $T_{k}$ should have $4^n$ cells at level $n$, so
it should have $4^{n-1}$ nodes at level $n-1$. Then, by definition $\{
\xi_i\dvtx i\ge1, D_i< n\}$ has the distribution of the set of points used
to construct the process $Z_n$. Obviously, $\{\xi_i\dvtx i\ge1, D_i< n\}
\subseteq\{\xi_i\dvtx 1\le i\le K_n\},$ and for any natural number $x>0$,
\begin{eqnarray*}
\mathbf{P}\bigl(L_n< \gamma^n\bigr)
 &\le&
\mathbf{P}\bigl(\exists i,j\le K_n
\dvtx i\neq j, |U_i-U_j|<\gamma^n\bigr)
\\
&\le&
\mathbf{P}\bigl(\exists i,j\le x^n\dvtx i\neq j, |U_i-U_j|<
\gamma^n\bigr)
 +
\mathbf{P}\bigl(K_n>x^n\bigr)
\\
&\le& x^{2n}\cdot2\gamma^n +
\mathbf{P}\bigl(K_n >
x^n\bigr),
\end{eqnarray*}
by the union bound.
The random variable $K_n$ is related to the fill-up level of a random
quadtree, which has been studied by \cite{Devroye1987}; see also \cite
{DeLa1990}. We could not find a reference giving a precise tail bound,
so we proved one here in Lemma~\ref{lemfill-up}.
We obtain
\[
\mathbf{P}\bigl(K_n > x^n\bigr)
 =
\mathbf{P}(F_{x^n}< n)
\le4
\bigl(4x^{-1/100} \bigr)^n
\]
as long as $x\ge22$ and $n \ge n_0(x)$ (the condition for the bound in
Lemma~\ref{lemfill-up} to hold).
It follows readily that
\begin{eqnarray*}
\mathbf{P}\bigl(L_n<\gamma^n\bigr)
 &\le&2 \bigl(x^2\gamma
\bigr)^n + 4 \bigl(4x^{-1/100} \bigr)^n
\\
&\le&6^{n+1} \gamma^{n/201},
\end{eqnarray*}
upon choosing { $x = \lceil4^{100/201} \gamma^{-100/201}\rceil$ (i.e.,
$x^2\gamma\approx 4 x^{-1/100}$) and $\gamma< 4 \cdot{22^{-2.01}}$ which
implies $ x > 22$.}
This completes the proof.
\end{pf}

\section{\texorpdfstring{Complements to the proof of Propopsition~\protect\ref{PROPUNIFORMCONVERGENCE}}
{Complements to the proof of Propopsition 12}}

\subsection{\texorpdfstring{Behavior away from the edge: Proof of Lemma~\protect\ref{lemuniformmiddle}}
{Behavior away from the edge: Proof of Lemma 15}} \label{subsecaway}

The core of the work is to bound the second term in \eqref
{equniformdecomp} involving $s\in(\delta,1/2]$. We prove that $\Ec
{P_t(s)}$ is uniformly Cauchy on $(\delta,1/2]$ by tightening some of
the arguments in~\cite{CuJo2010}. We could start from (14) there, but
we feel that the reader would follow more easily if we re-explain the
approach. Observe that most of the quantities defined in the remaining
of the section will depend on $s$ which we will neglect in the notation
for the sake of readability.

The first step is to unfold $k$ levels of the fundamental recurrence
\eqref{eqCnrec} in the Poisson case. Let $\tau_1$ be the arrival time
of the first point in
the Poisson process and $Q_1 = Q_1(s)$ be the lower of the two
rectangles that intersect the line $\{x=s\}$ after inserting the first point.
Inductively let $\tau_k = \tau_k(s)$ be the arrival time of the first
point of the process in the region $Q_{k-1}$ and $Q_k$ be the lower of
the two rectangles
that hit the line $\{x=s\}$ at time $\tau_k$. For convenience, set $Q_0
= [0,1]^2$. Finally, let $\tilde P_t$ be an independent copy of the
process $P_t$
(set $\tilde P_t \equiv0$ for $t < 0$).
At level one, using the horizontal symmetry, we have
\[
\mathbf{E} \bigl[P_t(s)\bigr] =
\mathbf{P}(t\ge\tau_1)
+2
\mathbf{E} \bigl[\tilde P_{\vol(Q_1)(t-\tau_1)}(\xi_1)\bigr],
\]
where $\xi_1 = \xi_1(s)\in[0,1]$ denotes the location of the line $\{
x=s\}$ relative to the region $Q_1$. If the interval $[\ell_1,r_1]$
denotes the projection of
$Q_1$ on the first axis, we have
\[
\xi_1(s)=\frac{s-\ell_1}{r_1-\ell_1}.
\]
Write $\xi_k = \xi_k(s)\in[0,1]$ for the location of the line $\{x=s\}
$ relatively to the region $Q_k$, and $M_k=\vol(Q_k)$. Then, unfolding
up to level $k$, we obtain
%
\begin{equation}
\label{eqkunfold0} \mathbf{E} \bigl[P_t(s)\bigr]
=g_k(t) + 2^k \mathbf{E} \bigl[\tilde
P_{M_k (t-\tau_k)}(\xi_k)\bigr],
\end{equation}
where $0\le g_k(t)\le2^k-1$. Next, we introduce the inter-arrival
times $\zeta_k' = \tau_k - \tau_{k-1}$ with $\zeta_0':= 0$ and their
normalized versions $\zeta_k = \zeta_k' M_{k-1}$ (again $\zeta_0:=
0$). Defining $F_k = M_k \tau_k$, we can rewrite (\ref{eqkunfold0}) as
%
\begin{equation}
\label{eqkunfold} \mathbf{E} \bigl[P_t(s)\bigr]
=g_k(t) + 2^k \mathbf{E} \bigl[\tilde
P_{M_k t-F_k}(\xi_k)\bigr].
\end{equation}
Note that $(\zeta_k)_{k \geq1}$ are i.i.d. exponential random
variables with unit mean, also independent of $(\xi_k, Q_k)_{k \geq
1}$. \par
Before going any further, note that, as we have already seen in
Section~\ref{seclimit}, the region $Q_k$, is not distributed like a
typical rectangle at
level $k$; in particular $\vol(Q_k)$ is not distributed as $X_1Y_1
\cdots X_kY_k$, for independent
$[0,1]$-uniform random variables $X_i,Y_i$, $i\ge1$. Intuitively,
$Q_k$ should be stochastically larger than a typical cell, since it is
conditioned to intersect the line $\{x=s\}$. This is verified by the
following lemma.
%
\begin{lem}\label{lemvollower}For any $s\in(0,1)$, any integer $k\ge
0$ and $1\le i\le2^k$, we have
\[
\vol(Q_k)= M_k \gest X_1Y_1
\cdots X_kY_k,
\]
where $X_i,Y_i$, $i\ge1$ are independent random variables uniform on $[0,1]$.
\end{lem}
\begin{pf}Consider one split, at a point $(X,Y)$ uniform inside the
unit square. The split creates four new boxes, two of them being hit by
$s$. Let $L$ be the length these two cells. Their height is either $Y$
or $(1-Y)$, which are both uniform. So it suffices to prove that $L
\gest X$. By symmetry, it suffices to consider $s\le1/2$. We have
\[
L=\I{s\le X}X + \I{s>X}(1-X).
\]
Write $F_L(y)=\p{L\le y}$ and $F_X(y)=\p{X\le y}=y$. It is then easy to
see that
\[
F_L(y)=
\mathbf{P}(L\le y)
= \cases{ %
 0,&\quad $y
\le s,$
\vspace*{2pt}\cr
y-s,&\quad $s\le y\le1-s,$
\vspace*{2pt}\cr
2y-1,& \quad $y\ge1-s.$}
\]
Hence, for all $s\in(0,1/2)$ and all $y\in(0,1)$ we have $F_L(y)\le
y=F_X(y)$. The result follows.
\end{pf}

The second term will be treated using results for the case $s = \xi$,
for a uniform random variable $\xi$ independent of everything else.
Curien and Joseph \cite{CuJo2010} found a very clever way to circumvent the problem that
for any $k \geq1$,
the random variable $\xi_k$ is not uniformly distributed on $[0,1]$. In
their Proposition 4.1 they introduce a version of the homogeneous
Markov chain
$(\xi_k, \mathcal M_k)_{k \geq1}$ where $\mathcal M_k:= M_k/M_{k-1}$
together with a random time $T$ such that for any $k \in\N$,
conditionally on
$\{T \leq k\}$, the random variable $\xi_k$ is uniformly distributed on
$[0,1]$, independent of $(\mathcal M_1, \ldots, \mathcal M_k, T)$.
Choosing these random variables
independent of the process $\tilde P_t$ we will use them in the
following without changing the notation [$F_k$ can be constructed using
$(\mathcal M_{\ell})_{1 \leq\ell\leq k}$
and an additional set of i.i.d. exponential random variables with mean one].
The details of the definition of $T$ are not important for us. The only
crucial thing is that $T$ has exponential tails. Indeed, we have page
15 of~\cite{CuJo2010},
%
\begin{equation}
\label{eqboundK} \mathbf{E} \bigl[1.15^T\bigr] \le C_4
\bigl(s\wedge(1-s) \bigr)^{-1/2} \leq C_4
\delta^{-1/2}
\end{equation}
for some constant $C_4$ in the present case, $ \delta< s \le1/2$.\eject

Then, using \eqref{eqkunfold} and the triangle inequality, we obtain
for any $t$ and $r$ such that $r\ge t$,
%
\begin{eqnarray}
\label{eqmainmiddle}&& \bigl|t^{-\beta} \mathbf{E} \bigl[P_t(s)
\bigr] -r^{-\beta} \mathbf{E} \bigl[P_r(s)\bigr] \bigr|\nonumber\\
&&\qquad
\le2^k\bigl |t^{-\beta} \mathbf{E} \bigl[\tilde P_{M_k t-F_k}(
\xi_k)\bigr] -r^{-\beta} \mathbf{E} \bigl[\tilde
P_{M_k r-F_k}(\xi_k)\bigr]\bigr | + 2^{k+1}
r^{-\beta}
\nonumber
\\
&&\qquad\le2^k \bigl|t^{-\beta} \mathbf{E} \bigl[\tilde
P_{M_k t-F_k}( \xi_k)\I{T\le k}\bigr] -r^{-\beta}
\mathbf{E} \bigl[\tilde P_{M_k r-F_k}( \xi_k)\I{T\le k}\bigr] \bigr|
\\
&&\qquad\quad{} + 2^k \bigl|t^{-\beta} \mathbf{E} \bigl[\tilde
P_{M_k t-F_k}( \xi_k)\I{T> k}\bigr] -r^{-\beta} \mathbf{E}
\bigl[\tilde P_{M_k r-F_k}( \xi_k)\I{T> k}\bigr] \bigr|
\nonumber
\\
&&\qquad\quad{} + 2^{k+1} r^{-\beta}.\nonumber
\end{eqnarray}
To complete the proof of Lemma~\ref{lemuniformmiddle}, we now devise
explicit bounds for the two main terms in \eqref{eqmainmiddle} when
we can ensure that coupling occured by level $k$ (i.e., $T\le k$) or not.

(i) \emph{No coupling by level $k$, $T>k$.} In this case, we
bound the terms roughly. We obtain
\begin{eqnarray*}
&&2^k \bigl\llvert t^{-\beta} \mathbf{E} \bigl[\tilde
P_{M_k t-F_k}( \xi_k)\I{T> k}\bigr] -r^{-\beta
}
\mathbf{E} \bigl[\tilde P_{M_k r-F_k}( \xi_k)\I{T> k}\bigr] \bigr
\rrvert
\\
&&\qquad\le2^{k+1}\sup_{u\ge t} u^{-\beta} \mathbf{E}
\bigl[\tilde P_{M_k u-F_k}(\xi_k)\I {T> k}\bigr].
\end{eqnarray*}
One then essentially uses the uniform bound $\sup_s\sup_u u^{-\beta}\Ec
{P_{u}(s)}\le C_5$ (see~(10) in \cite{CuJo2010}) and H\"older's and
Markov's inequalities to leverage a bound that makes profit of the
exponential tails of $T$. The details are found in \cite{CuJo2010},
page 16. For any $u>0$ and $s\in(\delta,1/2]$, one has
\begin{eqnarray*}
&& u^{-\beta} 2^k \mathbf{E} \bigl[\tilde P_{M_k u-F_k}(
\xi_k) \I{T>k}\bigr] \\
&&\qquad\le C_5 2^{k}
s^{-1/p} \biggl(\frac2{(\beta p +1) (\beta p +2)}\biggr)^{(k-1)/p}
\biggl( \frac{
\mathbf{E} [1.15^T]}{1.15^k}\biggr)^{1-1/p}
\\
&&\qquad\le C_4 C_5 \delta^{-1/2-1/(2p)}\biggl(2 \biggl\{
\frac2 {(\beta p +1) (\beta p + 2)} \biggr\}^{1/p} 1.15^{1/p-1}
\biggr)^k,
\end{eqnarray*}
by the upper bound in \eqref{eqboundK}.
Choosing $p$ close enough to one that the term in the brackets above is
strictly less than one, we obtain for any $s\in(\delta,1/2]$ and real
numbers $t,r>0$,
%
\begin{eqnarray}
\label{eqTgek}
&&2^k \bigl|t^{-\beta} \mathbf{E} \bigl[\tilde
P_{M_k t-F_k}( \xi_k)\I{T> k}\bigr] -r^{-\beta
}
\mathbf{E} \bigl[\tilde P_{M_k r-F_k}( \xi_k)\I{T> k}\bigr] \bigr| \nonumber\\
&&\qquad
\le2 C_4 C_5 \delta^{-1/2-1/(2p)}(1-
\gamma)^k
\\
&&\qquad\le C_1 \delta^{-1} (1-\gamma)^k,\nonumber
\end{eqnarray}
where $C_1$ denotes a constant and $\gamma>0$ (and $p>1$ is now fixed).

(ii) \emph{Coupling has occurred before level $k$, $T\le k$.}
In this case, we need to be a little more careful and match some terms.
In what follows, we write $x_+=x \vee0$. We start with
\[
t^{-\beta} 2^k \mathbf{E} \bigl[\tilde P_{M_k t-F_k}(
\xi_k)\I{T\le k}\bigr] =2^k \mathbf{E} \bigl[\I {T\le k}
\bigl(M_k -t^{-1} F_k \bigr)^\beta_+
\theta(M_k t-F_k)\bigr],
\]
where $\theta(x)=x^{-\beta}_+ \Ec{P_x(X)}$ with $X$ a $[0,1]$-uniform
random variable independent of everything else. The estimate in \eqref
{hwangquad} is easily transferred to the Poissonized version, and we
have $\theta(x)=\kappa+O(x^{-\eta})$
for any $ 0 < \eta< \beta.$
Therefore
%
\begin{eqnarray}
\label{eqKgekinter} &&2^k \bigl|t^{-\beta} \mathbf{E} \bigl[\tilde
P_{M_k t-F_k}(\xi_k)\I{T\le k}\bigr] -r^{-\beta
}
\mathbf{E} \bigl[\tilde P_{M_k r-F_k}(\xi_k)\I{T\le k}\bigr] \bigr|
\nonumber\\
&&\qquad\le2^k \bigl| \mathbf{E} \bigl[\I{T\le k} \bigl(M_k
-t^{-1} F_k \bigr)^\beta_+ \theta(M_k
t-F_k)\bigr]
\nonumber
\\[-8pt]
\\[-8pt]
\nonumber
&&\qquad\qquad{} - \mathbf{E} \bigl[\I{T\le k} \bigl(M_k -r^{-1}
F_k \bigr)^\beta_+ \theta(M_k
r-F_k)\bigr] \bigr|
\nonumber
\\
&&\qquad\le2^k \mathbf{E} \bigl[\bigl| \bigl(M_k -t^{-1}
F_k \bigr)^\beta_+ \theta(M_k
t-F_k) - \bigl(M_k -r^{-1} F_k
\bigr)^\beta_+ \theta(M_k r-F_k) \bigr| \bigr].
\nonumber
\end{eqnarray}
Fix $\eta< \beta$. For $x>0$, we have, as $x\to\infty$
\begin{eqnarray*}
&& \bigl(M_k -x^{-1} F_k
\bigr)^\beta_+ \cdot\theta(M_k x-F_k)
\\
&&\qquad=M_k^\beta \bigl(1-O \bigl(x^{-1}F_k
M_k^{-1} \bigr) \bigr) \bigl(\kappa+O
\bigl(M_k^{-\eta} x^{-\eta
} \bigr) \bigr)
\\
&&\qquad=\kappa M_k^{\beta} + O \bigl(F_k
M_k^{\beta-1} x^{-1} \bigr) + O \bigl(M_k^{\beta-\eta}
x^{-\eta} \bigr) + O \bigl(F_k M_k^{\beta-1-\eta}
x^{-1-\eta} \bigr)
\\
&&\qquad=\kappa M_k^{\beta} + O \bigl(F_k
M_k^{\beta-1} x^{-1} \bigr) + O \bigl(x^{-\eta}
\bigr) + O \bigl(F_k M_k^{\beta-1-\eta}
x^{-1-\eta} \bigr),
\end{eqnarray*}
since $M_k\in(0,1)$ and $\eta<\beta$, the $O(\cdot)$ terms being
deterministic and uniform in $s \in[0,1]$. Going back to \eqref{eqKgekinter},
the terms $\kappa M_k^\beta$
coming from the two terms with $t$ and $r$ cancel out, and there exist
constants $C_7, C_8$ such that, for all $t,r$ large enough
such that moreover $t\le r$, we have
\begin{eqnarray*}
&&2^k\bigl |t^{-\beta} \mathbf{E} \bigl[\tilde P_{M_k t-F_k}(
\xi_k)\I{T\le k}\bigr] -r^{-\beta
} \mathbf{E} \bigl[\tilde
P_{M_k r-F_k}( \xi_k)\I{T\le k}\bigr] \bigr|
\\
&&\qquad\le C_7 2^k \bigl(t^{-1} \mathbf{E}
\bigl[F_k M_k^{\beta-1}\bigr] +t^{-\eta} +
t^{-1-\eta
} \mathbf{E} \bigl[F_k M_k^{\beta-1 -\eta}
\bigr]\bigr)
\\
&&\qquad\le C_8 2^k t^{-\eta} \mathbf{E}
\bigl[F_k M_k^{\beta-1 -\eta}\bigr].
\end{eqnarray*}
Since it will be necessary to choose $k$ tending to infinity with $r$
to control the term in \eqref{eqTgek}, it remains to estimate $\Ec{F_k
M_k^{\beta-1-\eta}}$.
By definition of $F_k=M_k \tau_k$, one easily verifies that $F_k\le\sum_{i=1}^k \zeta_k$, where the normalized inter-arrival times $\zeta_i$
were defined right after
\eqref{eqkunfold0}.
Since $M_i\le1$ for every $i$, we have
\begin{eqnarray*}
\mathbf{E} \bigl[F_k M_k^{\beta-1 -\eta}\bigr] &\le& k
\mathbf{E} \bigl[M_k^{\beta-1-\eta}\bigr]
\\
&\le& k \mathbf{E} \bigl[X^{\beta-1-\eta}\bigr] ^{2k}= k (\beta-
\eta)^{-2k},
\end{eqnarray*}
by the lower bound on $M_k$ in Lemma~\ref{lemvollower}, $X$ denoting
a uniform on $[0,1]$. We finally obtain
%
\begin{eqnarray}
\label{eqTlek} &&2^k\bigl |t^{-\beta} \mathbf{E} \bigl[\tilde
P_{M_k t-F_k}( \xi_k)\I{T\le k}\bigr] -r^{-\beta
}
\mathbf{E} \bigl[\tilde P_{M_k r-F_k}( \xi_k)\I{T\le k}\bigr]\bigr |
\nonumber
\\[-8pt]
\\[-8pt]
\nonumber
&&\qquad\le C_8 k t^{-\eta} 2^k (\beta-
\eta)^{-2k}.
\end{eqnarray}
Putting \eqref{eqTgek} and \eqref{eqTlek} together with \eqref
{eqmainmiddle} yields, for any $t,r>0$ such that $t\le r$
\begin{eqnarray*}
&&\bigl| t^{-\beta} \mathbf{E} \bigl[P_t(s)\bigr] -r^{-\beta}
\mathbf{E} \bigl[P_r(s)\bigr] \bigr| \\
&&\qquad\le C_1
\delta^{-1} (1- \gamma)^k + C_8 k
2^k (\beta- \eta)^{-2k} t^{-\eta}+2^{k+1}
t^{-\beta}
\\
&&\qquad\le C_1 \delta^{-1} (1-\gamma)^k +
C_2 k 2^k (\beta-\eta)^{-2k} t^{-\eta}
\end{eqnarray*}
for some constant $C_2$. The statement in Lemma~\ref
{lemuniformmiddle} follows readily from the triangle inequality.
\end{appendix}


%


\printaddresses

\end{document}